\documentclass[12pt]{amsart}
\usepackage{amsthm,amsmath,xspace,url,gensymb,amsfonts}
\usepackage{graphicx,overpic}
\usepackage{hyperref}
\usepackage{enumerate}

\title[Cyclic Sieving for torsion pairs of type $A_n$]{Cyclic Sieving
  for torsion pairs in the cluster category of Dynkin type $A_n$}

\author{Stefan Kluge}
\email{kluge.ish@web.de}

\author{Martin Rubey}
\address{Institut f\"ur Algebra, Zahlentheorie und Diskrete Mathematik,Leibniz
  Universit\"at Hannover, Welfengarten 1, D-30167 Hannover, Germany}
\email{martin.rubey@math.uni-hannover.de}
\urladdr{http://www.iazd.uni-hannover.de/~rubey/}

\keywords{cluster category, torsion pair, Ptolemy diagram, polygon
  dissection, cyclic sieving}

\newtheorem{thm}{Theorem}[section]
\newtheorem{lem}[thm]{Lemma}
\newtheorem{cor}[thm]{Corollary}
\newtheorem{prop}[thm]{Proposition}

\theoremstyle{definition}
\newtheorem{dfn}[thm]{Definition}
 
\theoremstyle{remark}
\newtheorem{rmk}{Remark}

\newtheorem*{eg*}{Example}

\newcommand{\Set}[1]{\ensuremath{\mathcal{#1}}}  
\newcommand{\Diag}[1]{\ensuremath{\mathfrak{#1}}}
\newcommand{\Cat}[1]{\ensuremath{\mathsf{#1}}}   
\newcommand{\Dfn}[1]{\emph{#1}}                  
\newcommand{\Hom}{Hom}

\newcommand{\integers}{\mathbb Z}
\newcommand{\size}[1]{\left\lvert #1\right\rvert}

\newcommand{\qi}[2][q]{[#2]_{#1}}
\newcommand{\qbinom}[3][q]{\genfrac{[}{]}{0pt}{}{#2}{#3}_{#1}}

\newcommand{\mymod}[1]{\!\!\!\pmod{#1}}

\DeclareMathOperator{\nc}{nc}

\def\yv{z}
\def\tv{x}
\def\pv{y_1}
\def\cv{y_2}
\def\yk{\bar z}
\def\tk{\bar x}
\def\xck{\bar y_1}
\def\xdk{\bar y_2}
\def\tr{k}
\def\po{\ell}
\def\cl{m}
\def\di{d}
\def\bi{b}

\begin{document}
\maketitle

\begin{abstract} 
  Recently, a combinatorial model for torsion pairs in the cluster
  category of Dynkin type $A_n$ was introduced, and used to derive an
  explicit formula for their number.  In this article we determine
  the number of torsion pairs that are invariant under $\bi$-fold
  application of Auslander-Reiten translation.

  It turns out that the set of torsion pairs together with
  Auslander-Reiten translation, and a natural $q$-analogue of the
  formula for the number of all torsion pairs exhibits the cyclic
  sieving phenomenon.
\end{abstract}
\section{Introduction}
\label{sec:introduction}

\subsection{Torsion Pairs and Ptolemy Diagrams}
Very recently, Ptolemy diagrams were introduced by Thorsten Holm,
Peter J\o rgensen and Martin Rubey in~\cite{HolmJorgensenRubey2010}
as a combinatorial model for torsion pairs in the cluster category of
Dynkin type $A_n$.  Similar to triangulations of the $(n+3)$-gon,
which can be regarded as a combinatorial model for tilting objects,
Ptolemy diagrams are certain subsets of the set of (proper) diagonals
of an $(n+3)$-gon with a distinguished base edge.  As in the case of
triangulations, each such diagonal corresponds to an indecomposable
object in the cluster category.

\begin{figure}[h]
  \centering
  \raisebox{16pt}{$\Set P=$}%
  \begin{overpic}[clip=true,trim=60 80 460 710,angle=270]{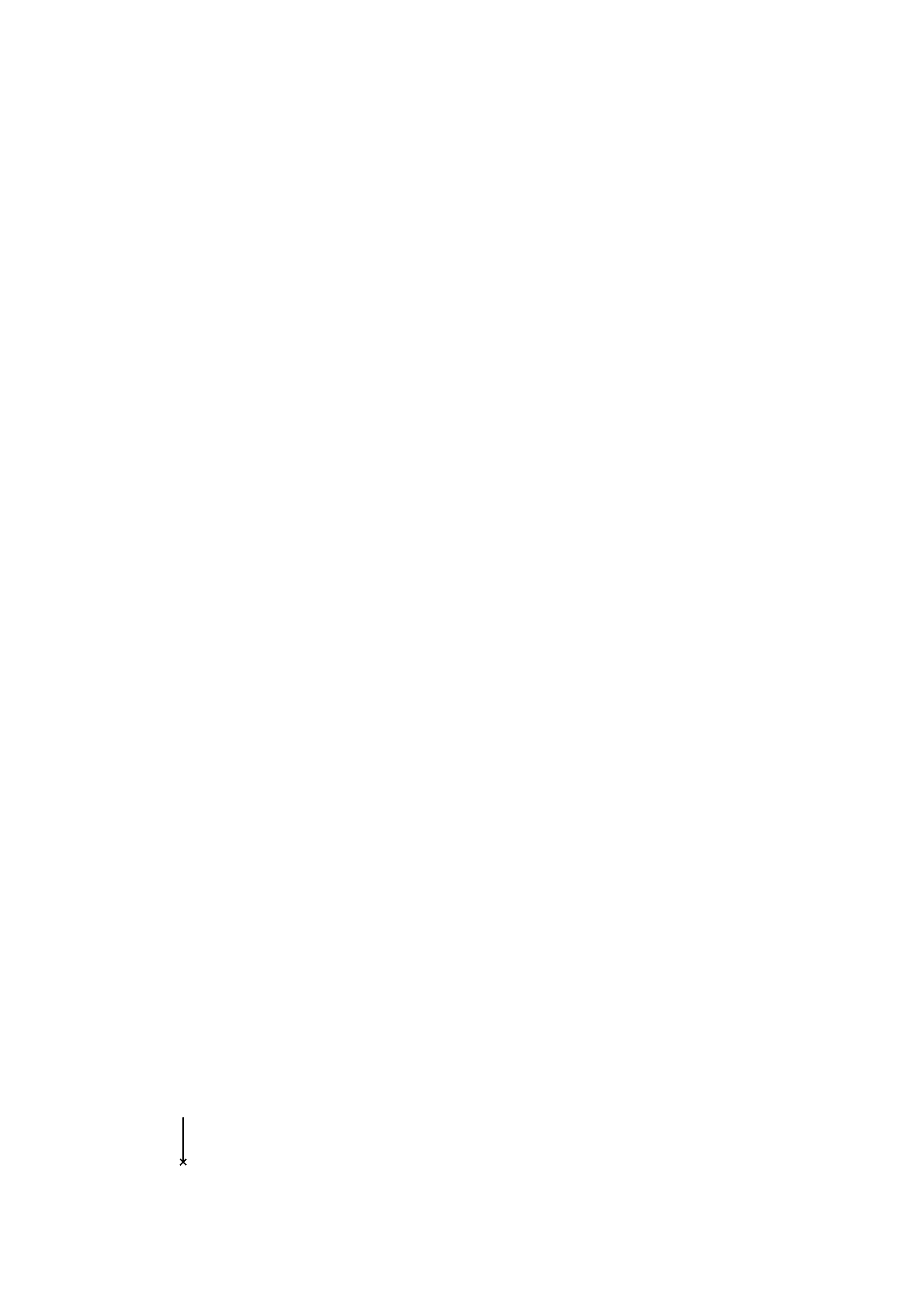}
  \end{overpic}
  \raisebox{16pt}{$\mathaccent\cdot\cup$}%
  \begin{overpic}[clip=true,trim=60 80 460 710,angle=270]{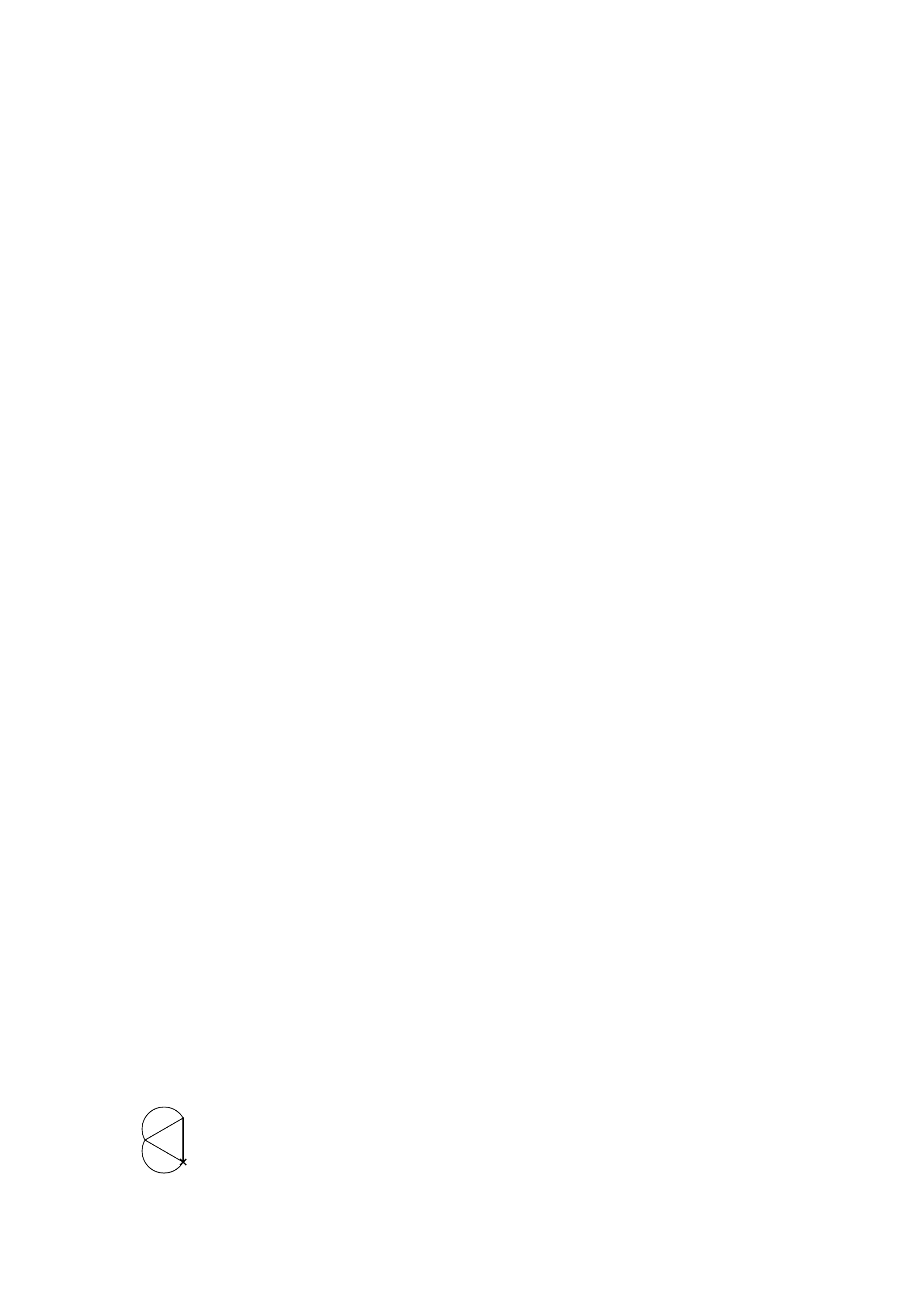}
    \put(14,35){\Set P}
    \put(51,35){\Set P}
  \end{overpic}
  \raisebox{16pt}{$\mathaccent\cdot\cup$}%
  \begin{overpic}[clip=true,trim=60 70 460 700,angle=270]{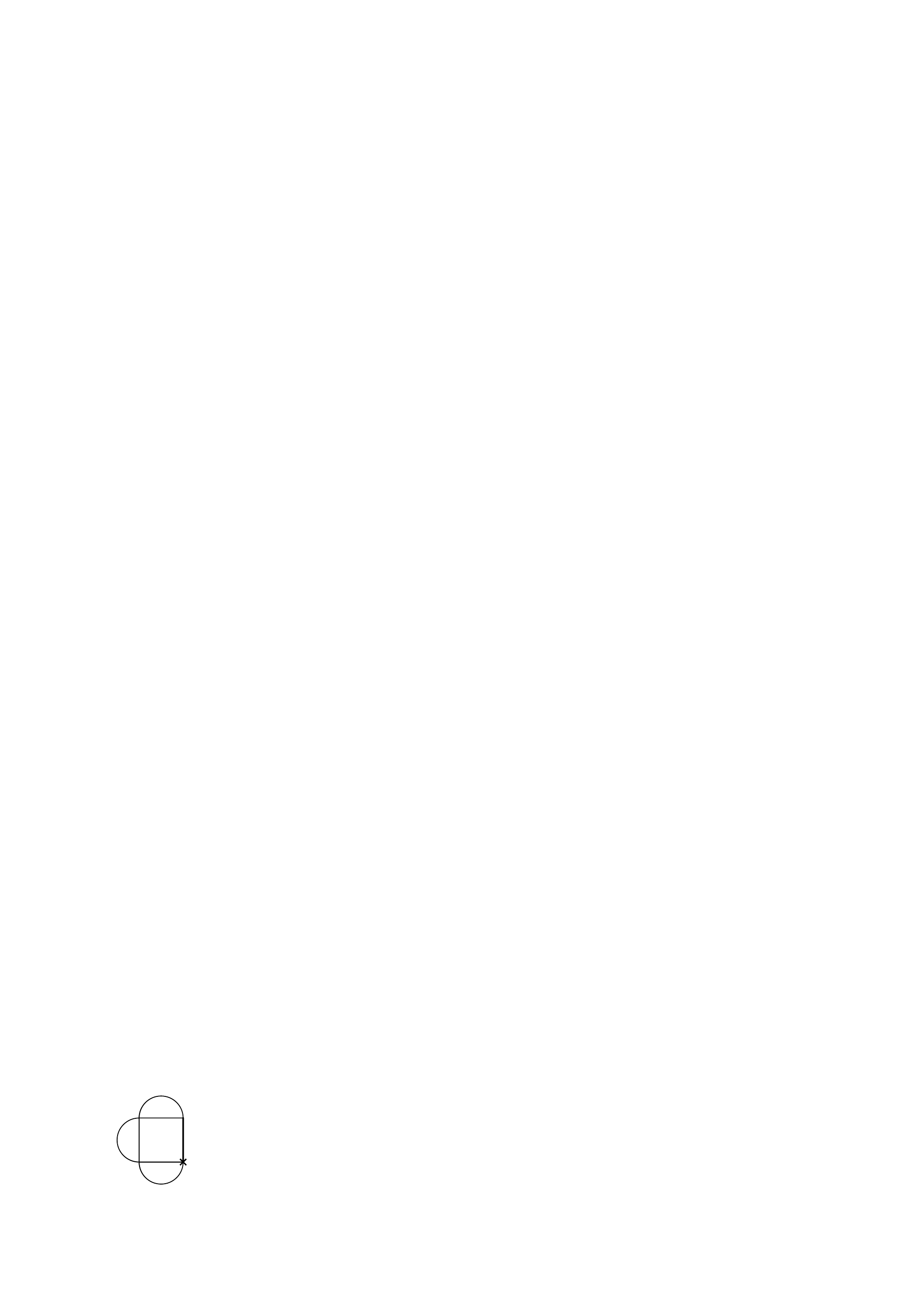}
    \put(17,35){\Set P}
    \put(72,35){\Set P}
    \put(45,65){\Set P}
  \end{overpic}
  \raisebox{16pt}{$\mathaccent\cdot\cup$}%
  \begin{overpic}[clip=true,trim=60 70 460 700,angle=270]{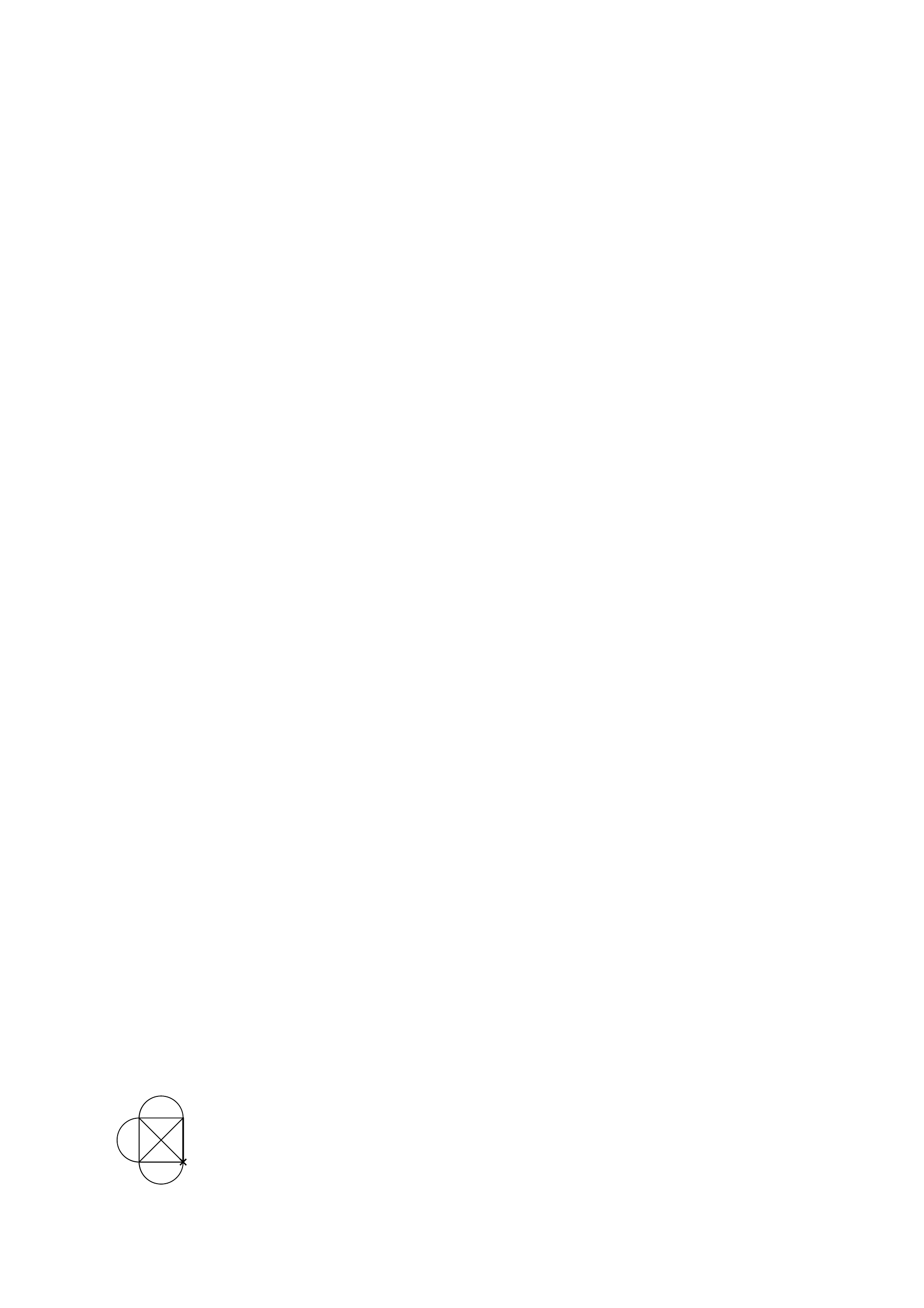}
    \put(17,35){\Set P}
    \put(72,35){\Set P}
    \put(45,65){\Set P}
  \end{overpic}
  \raisebox{16pt}{$\mathaccent\cdot\cup$}%
  \begin{overpic}[clip=true,trim=60 70 460 700,angle=270]{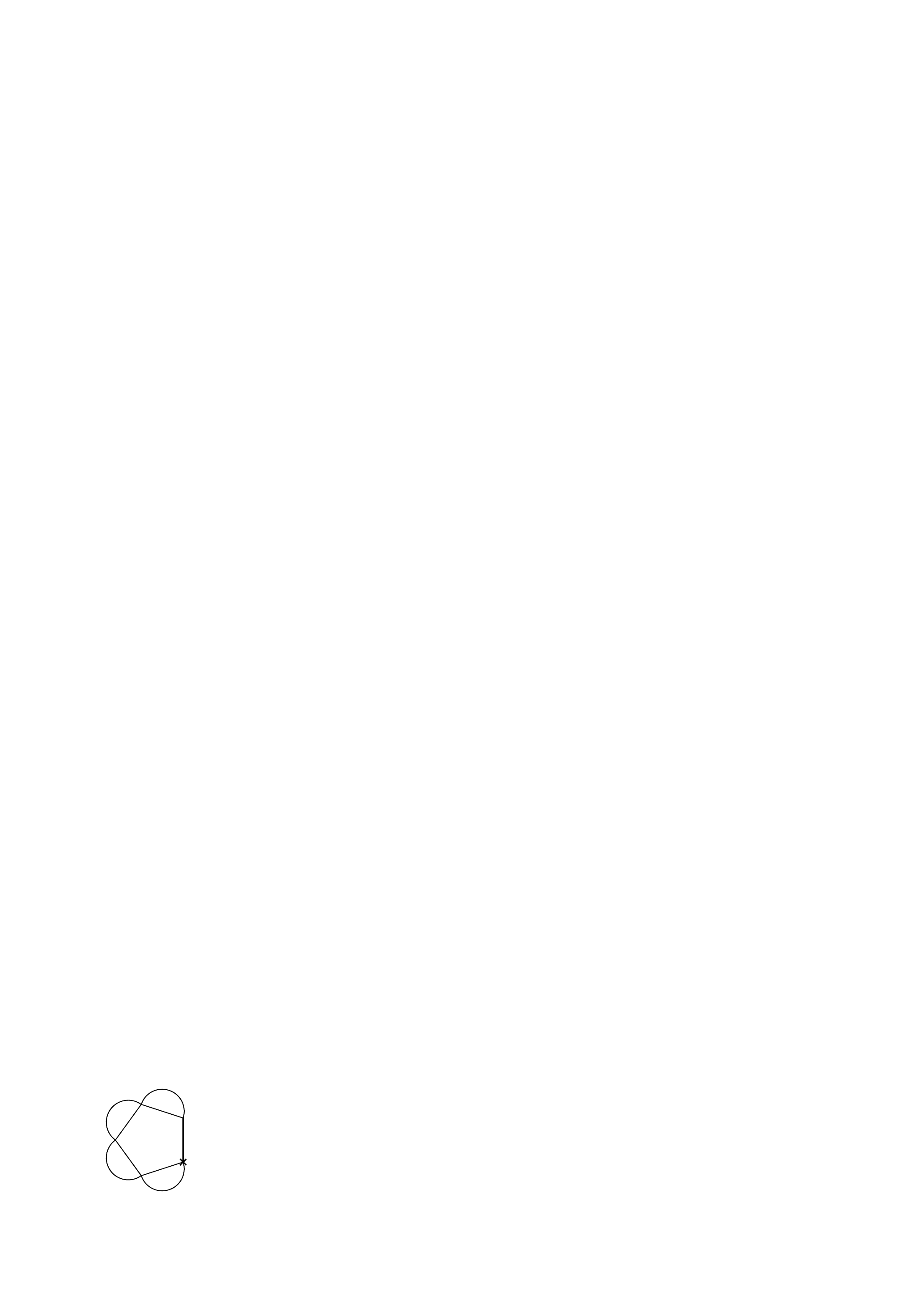}
    \put(68,70){\Set P}
    \put(25,70){\Set P}
    \put(12,30){\Set P}
    \put(78,30){\Set P}
  \end{overpic}
  \raisebox{16pt}{$\mathaccent\cdot\cup$}%
  \begin{overpic}[clip=true,trim=60 70 460 700,angle=270]{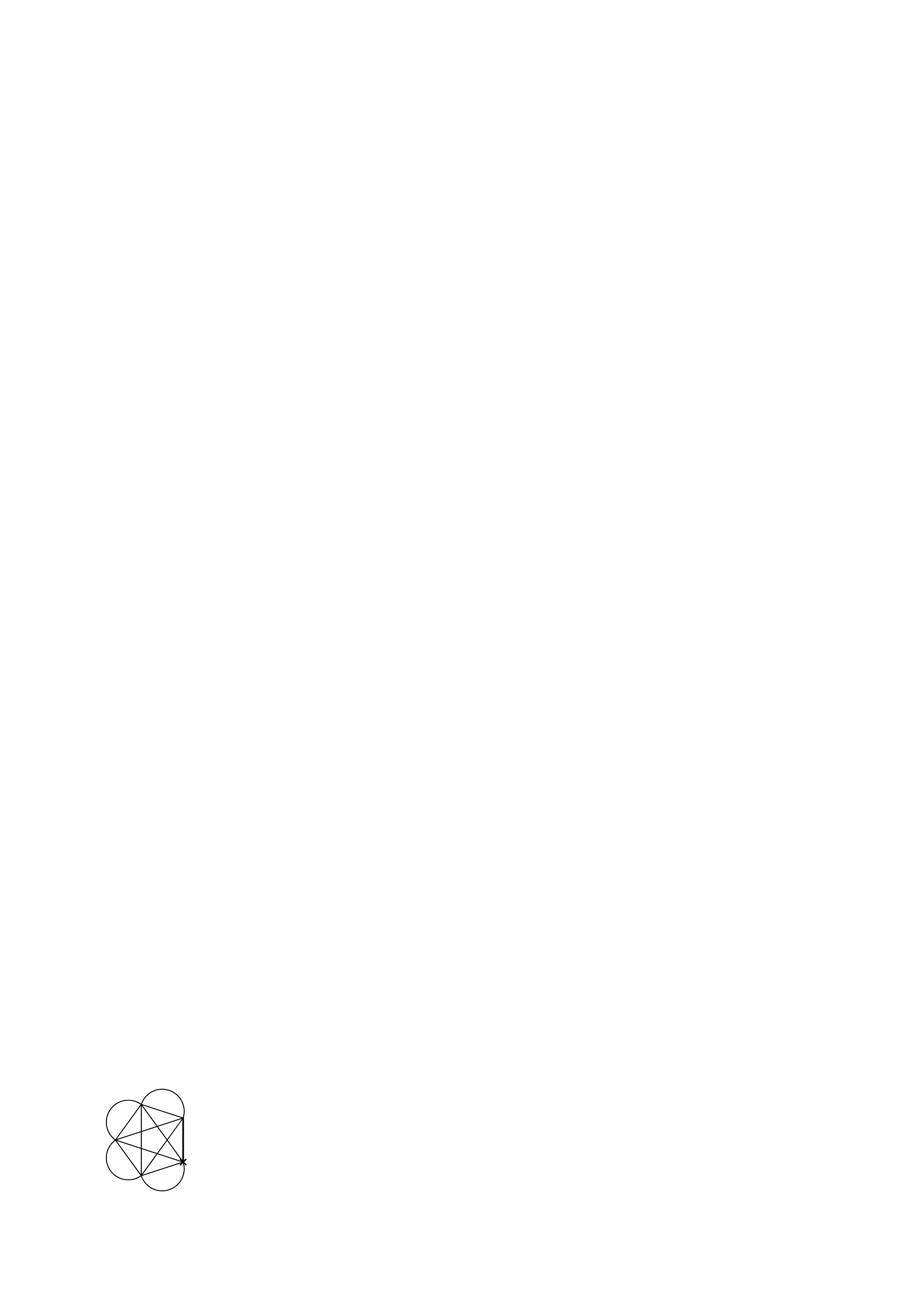}
    \put(68,70){\Set P}
    \put(25,70){\Set P}
    \put(12,30){\Set P}
    \put(78,30){\Set P}
  \end{overpic}
  \raisebox{16pt}{$\mathaccent\cdot\cup\dots$}%
  \caption{The decomposition of the set of Ptolemy diagrams with a
    distinguished base edge.}
  \label{fig:decomposition}
\end{figure}
The set $\Set P$ of Ptolemy diagrams with distinguished base edge can
be described recursively as indicated in
Figure~\ref{fig:decomposition}, see
\cite[Proposition~2.4]{HolmJorgensenRubey2010}.  More precisely,
$\Set P$ is the disjoint union of%
\begin{enumerate}[(i)]
\item the degenerate Ptolemy diagram, consisting of two vertices
  and the distinguished base edge only,
\item a triangle with a distinguished base edge and two Ptolemy
  diagrams glued along their distinguished base edges onto the
  other edges,
\item a clique, {\it i.e.}, a diagram with at least four edges and
  all diagonals present, with a distinguished base edge and Ptolemy
  diagrams glued along their distinguished base edges onto the
  other edges.
\item an empty cell, {\it i.e.}, a polygon with at least four edges
  without diagonals, with a distinguished base edge and Ptolemy
  diagrams glued along their distinguished base edges onto the
  other edges,
\end{enumerate}
In other words, Ptolemy diagrams are obtained by gluing together \lq
elementary\rq\ Ptolemy diagrams, {\it i.e.}, triangles, cliques and
empty cells.  Thus, one could also describe Ptolemy diagrams as
polygon dissections (see, {\it e.g.}  \cite[Proposition~6.2.1 ({\rm
  vi})]{EC2}), in which each region of size at least four receives
one of two colours.

A natural operation on Ptolemy diagrams is (counterclockwise)
rotation.  In the cluster category, rotation corresponds to
Auslander-Reiten translation $\tau$, or, equivalently, application of
the suspension functor $\Sigma$.  Both the total number of Ptolemy
diagrams with distinguished base edge, and the number of diagrams up
to rotation, were already determined, see~\cite[Theorem~B and
Proposition~3.4]{HolmJorgensenRubey2010}.  The central goal of this
article is to determine the number of Ptolemy diagrams that are
invariant under rotation by a given angle.

Apart from rotation, another natural operation on Ptolemy diagrams
consists of replacing every clique by an empty cell and vice versa.
More generally, given \emph{any} set of diagonals $\Diag A$ of the
$(n+3)$-gon, let $\nc \Diag A$ be the set of diagonals that cross no
diagonal in $\Diag A$.  It turns out that $\Diag A$ is a Ptolemy
diagram if and only if $\Diag A=\nc\nc\Diag A$,
see~\cite[Proposition~2.6]{HolmJorgensenRubey2010}.  Note that
precisely those Ptolemy diagrams that are triangulations remain
invariant under this operation.

Let $\Cat A$ be a subcategory of the cluster category of Dynkin type
$A_n$, closed under direct sums and direct summands, that corresponds
to a set of diagonals $\Diag A$.  Then the \Dfn{perpendicular
  subcategory} $\Cat A^\perp$ is the subcategory of the same cluster
category that consists of those objects that have only the zero map
to any object in $\Cat A$:
$$
\Cat A^\perp = \{c: \Hom(c, a)=0\text{ for all } a\in \Cat A\}
$$
One can show that $(\Cat A, \Cat A^\perp)$ is a torsion pair if and
only if $\Diag A$ is a Ptolemy diagram,
see~\cite[Proposition~2.3]{HolmJorgensenRubey2010}.  In this setting,
$\Cat A^\perp$ also corresponds to a Ptolemy diagram, namely
$\Sigma\nc\Diag A$.  As a corollary of our main result we also obtain
the number of Ptolemy diagrams invariant under taking perpendicular
subcategories a given number of times.

\subsection{Cyclic Sieving Phenomena}
The cyclic sieving phenomenon was first described in 2004 by Victor
Reiner, Denis Stanton and Dennis White~\cite{MR2087303}.  It involves
a finite set $\Set X$, a cyclic group $C$ of order $n$ acting on
$\Set X$, and a polynomial $X(q)$.

\begin{dfn}
  The triple $\big(\Set X,C,X(q)\big)$ \Dfn{exhibits the cyclic
    sieving phenomenon} if for every $c\in C$ we have
  $$
  X(\omega_{o(c)})=\size{\Set X^c},
  $$
  where $o(c)$ denotes the order of $c\in C$, $\omega_\di$ is a
  $\di^{th}$ primitive root of unity and $\Set X^c=\left\{x\in \Set
    X:c(x)=x\right\}$ denotes the set of fixed points of $\Set X$
  under the action of $c\in C$.
\end{dfn}
In particular, $X(1)=\size{\Set X}$, {\it i.e.}, $X(q)$ is a
$q$-analogue of the generating function for $\Set X$.

We remark that the cyclic sieving polynomial $X$ is unique only
modulo $q^n-1$.  However, for the unique cyclic sieving polynomial of
degree at most $n-1$, there is an alternative description, which
makes the combinatorics of the orbit structure of $C$ acting on $\Set
X$ explicit.
\begin{prop}[\protect{\cite[Proposition~2.1]{MR2087303}}]
  Let the \Dfn{stabiliser order} of a $C$-orbit of $\Set X$ be the
  number of elements $c\in C$ that fix an element (and therefore all
  elements) in this orbit.

  For $0\leq\ell<n$ let $a_\ell$ be the number of orbits of the
  action of $C$ on $\Set X$, whose stabiliser order divides $\ell$,
  and let $X(q)=\sum_{\ell=0}^{n-1} a_\ell q^\ell$.  Then the triple
  $\big(\Set X,C,X(q)\big)$ exhibits the cyclic sieving phenomenon.
\end{prop}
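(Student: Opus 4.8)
The plan is to evaluate both sides of the asserted identity $X(\omega_{o(c)})=\size{\Set X^c}$ directly and to check that they agree, exploiting throughout that $C$ is cyclic so that every subgroup-theoretic question reduces to divisibility of integers. First I would sort the orbits of $C$ on $\Set X$ according to their stabiliser order. Since $C$ is cyclic of order $n$, it has a unique subgroup of each order dividing $n$, and the stabiliser of any point of a given orbit is one such subgroup; writing $s$ for its order, one has $s\mid n$ and the orbit has size $n/s$. Letting $N_s$ be the number of orbits whose stabiliser order equals $s$, the definition of $a_\ell$ reads $a_\ell=\sum_{s\mid\ell}N_s$ for $0\le\ell<n$.

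Next I would compute the right-hand side. Fixing $c\in C$ and setting $d=o(c)$, Lagrange's theorem gives $d\mid n$, and $\langle c\rangle$ is the unique subgroup of $C$ of order $d$. A point $x$ lies in $\Set X^c$ exactly when $c$ lies in its stabiliser, equivalently when the order-$d$ subgroup $\langle c\rangle$ is contained in the order-$s$ stabiliser; in a cyclic group this holds if and only if $d\mid s$. Hence $c$ fixes either all or none of a given orbit, and summing the orbit sizes over the fixed orbits I expect to arrive at
\begin{equation*}
  \size{\Set X^c}=\sum_{\substack{s\mid n\\ d\mid s}}N_s\,\frac{n}{s}.
\end{equation*}

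For the left-hand side I would substitute $a_\ell=\sum_{s\mid\ell}N_s$, interchange the two summations, and reindex the multiples of $s$ below $n$, obtaining
\begin{equation*}
  X(\omega_d)=\sum_{s\mid n}N_s\sum_{\substack{0\le\ell<n\\ s\mid\ell}}\omega_d^{\ell}
  =\sum_{s\mid n}N_s\sum_{j=0}^{n/s-1}\big(\omega_d^{s}\big)^{j}.
\end{equation*}
The inner geometric sum equals $n/s$ when $\omega_d^{s}=1$, i.e.\ when $d\mid s$, and vanishes otherwise, since then its value is $\big((\omega_d^{s})^{n/s}-1\big)/(\omega_d^{s}-1)=(\omega_d^{n}-1)/(\omega_d^{s}-1)=0$, using $\omega_d^{n}=1$. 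Comparing with the previous display finishes the argument.

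I expect the only step requiring genuine care to be the vanishing of the geometric sum, which relies on two features of cyclic groups: that $d=o(c)$ divides $n=\size C$, so that $\omega_d^{n}=1$, and that containment of subgroups is equivalent to divisibility of their orders, so that the fixed-point condition ``$c$ fixes the orbit'' matches exactly the condition $d\mid s$ produced by the root-of-unity computation. The remaining steps are the reindexing of the double sum and elementary bookkeeping.
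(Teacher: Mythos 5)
Your argument is correct. The paper does not actually prove this proposition but quotes it from Reiner--Stanton--White, and your proof---stratifying the orbits by their stabiliser order into the counts $N_s$, using orbit--stabiliser to get $\size{\Set X^c}=\sum_{s\mid n,\ d\mid s}N_s\,\tfrac{n}{s}$, and matching this with $X(\omega_d)$ by interchanging the two sums and observing that the inner geometric series vanishes unless $d\mid s$---is essentially the standard argument given in that reference, with all the delicate points (stabilisers in a cyclic group are determined by their order, containment of subgroups is divisibility of orders, and $N_s$ is supported on divisors of $n$ so the reindexing is legitimate) handled correctly.
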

In particular, $a_0$ is the total number of orbits and $a_1$ is the
number of free orbits.

\subsection{Main Theorems}
\begin{figure}
  \begin{overpic}[width=0.8\textwidth,clip=true,trim=80 608 269
    105]{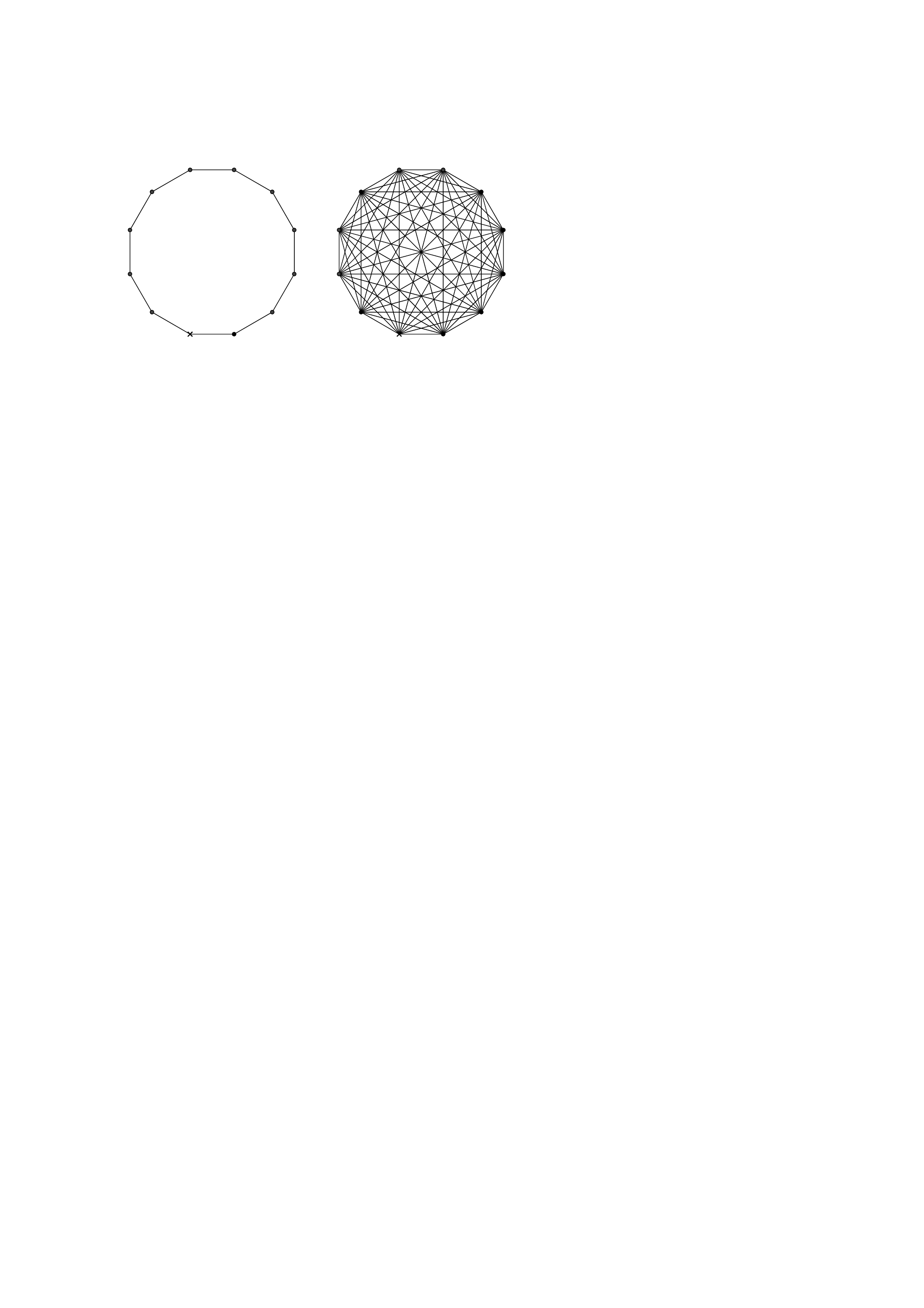}
    \put(13,2){$P^{(4)}_{11, 0, 0, 1}=1$}%
    \put(67,2){$P^{(4)}_{11, 0, 1, 0}=1$}
  \end{overpic}
  \begin{overpic}[width=0.8\textwidth,clip=true,trim=90 649 260 60]{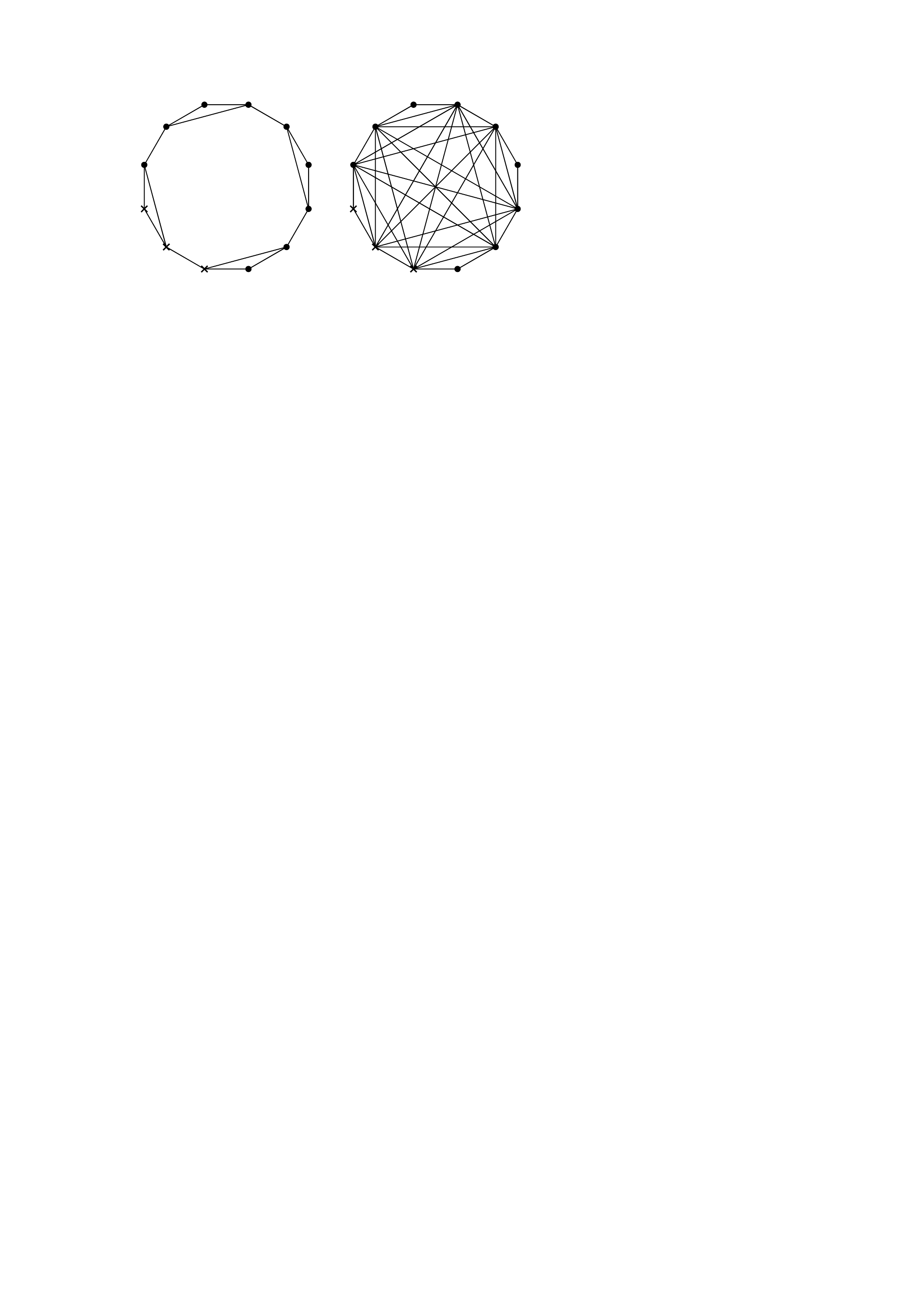}
    \put(13,2){$P^{(4)}_{11, 4, 0, 1}=3$}%
    \put(67,2){$P^{(4)}_{11, 4, 1, 0}=3$}
  \end{overpic}
  \begin{overpic}[height=\textwidth,clip=true,trim=60 80 203 220,angle=270]{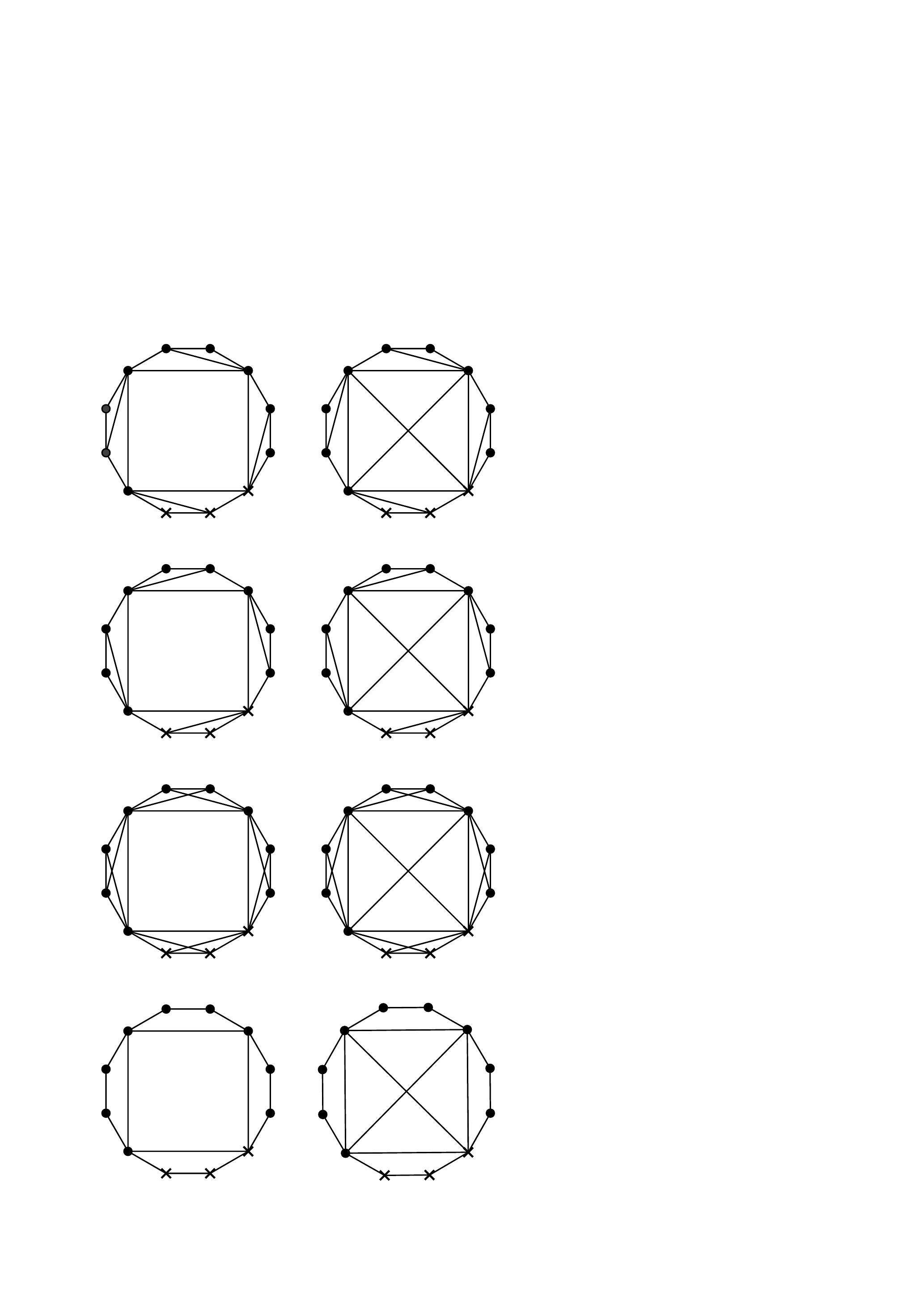}
    \put(9,2){$P^{(4)}_{11, 0, 1, 4}=P^{(4)}_{11, 0, 5, 0}=3$}%
    \put(9,8){$P^{(4)}_{11, 0, 0, 5}=P^{(4)}_{11, 0, 4, 1}=3$}%
    \put(69,2){$P^{(4)}_{11, 8, 1, 0}=6$}%
    \put(69,8){$P^{(4)}_{11, 8, 0, 1}=6$}
  \end{overpic}
  \caption{Ptolemy diagrams on the twelve-gon with fourfold symmetry.
    Crosses indicate different choices of base vertices.}
  \label{fig:rotation}
\end{figure}
\begin{thm}\label{thm:number}
  Let $\Set{P}_{N,\tr,\po,\cl}$ be the set of Ptolemy diagrams on the
  $(N+1)$-gon with a distinguished base edge, with $\tr$ triangles,
  $\po$ cliques of size at least four and $\cl$ empty cells of size
  at least four.  Then the cardinality of $\Set{P}_{N,\tr,\po,\cl}$
  is
  $$
  P_{N,\tr,\po,\cl}=%
  \frac{1}{N}%
  \binom{N-1+\tr+\po+\cl}{N-1,\tr,\po,\cl}%
  \binom{N-2-\tr-\po-\cl}{\po+\cl-1},
  $$
  where we set $\binom{n}{n}=1$ for $n\in\integers$.
\end{thm}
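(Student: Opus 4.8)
The plan is to translate the recursive description of Ptolemy diagrams from Figure~\ref{fig:decomposition} (equivalently \cite[Proposition~2.4]{HolmJorgensenRubey2010}) into a functional equation for a multivariate generating function, and then to extract coefficients by Lagrange inversion. Concretely, I would let $z$ mark the number of non-base edges of the surrounding polygon (so that an $(N+1)$-gon contributes $z^N$), let $x$ mark triangles, and let $y_1,y_2$ mark cliques and empty cells of size at least four, setting
\[
F(z,x,y_1,y_2)=\sum_{N,k,\ell,m}P_{N,k,\ell,m}\,z^N x^k y_1^\ell y_2^m .
\]
The degenerate diagram contributes $z$. A triangle has two non-base edges, onto each of which a Ptolemy diagram is glued, contributing $xF^2$ (the two factors of $F$ already carry the $z$-weight of the glued subpolygons, so no extra power of $z$ is needed). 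A cell of size $j\ge 4$ has $j-1$ non-base edges and hence contributes $F^{j-1}$; summing the geometric series over $j\ge 4$ and over the two colours gives $(y_1+y_2)\,F^3/(1-F)$. Hence
\[
F=z+xF^2+(y_1+y_2)\frac{F^3}{1-F}.
\]

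Next I would rewrite this as $z=F\,g(F)$ with $g(w)=1-xw-(y_1+y_2)\,w^2/(1-w)$, so that $F$ is the compositional inverse of $z=w\,g(w)$ and $g(0)=1$. The Lagrange inversion formula then gives
\[
P_{N,k,\ell,m}=\frac1N\,[w^{N-1}x^k y_1^\ell y_2^m]\,g(w)^{-N}.
\]
I would expand $g(w)^{-N}=\bigl(1-xw-(y_1+y_2)w^2/(1-w)\bigr)^{-N}$ by the multinomial/negative-binomial theorem as $\sum_{i,j\ge0}\binom{N-1+i+j}{N-1,i,j}(xw)^i\bigl((y_1+y_2)w^2/(1-w)\bigr)^j$. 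Extracting $x^k$ forces $i=k$; extracting $y_1^\ell y_2^m$ forces $j=\ell+m$ and produces the factor $\binom{\ell+m}{\ell}$ from $(y_1+y_2)^{\ell+m}$, which combines with the multinomial to give exactly $\binom{N-1+k+\ell+m}{N-1,k,\ell,m}$. What remains is $[w^{N-1}]\,w^{k+2(\ell+m)}/(1-w)^{\ell+m}=\binom{N-2-k-\ell-m}{\ell+m-1}$, which yields the claimed formula.

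The coefficient extraction is essentially mechanical, so the part demanding the most care is the \emph{setup} rather than the computation. I would need to verify that the weighting by $z$ correctly records the polygon size under gluing---in particular that both colours of cells of size $\ge4$ contribute the same series $F^3/(1-F)$ and that the degenerate/base edge is counted consistently---since an off-by-one here propagates into every binomial coefficient. A secondary point is the boundary behaviour of the final binomial coefficient: when $\ell+m=0$ (pure triangulations, which force $k=N-1$) its lower index becomes $-1$, and one must check that the stated convention $\binom{n}{n}=1$ together with $\binom{n}{-1}=0$ for $n\ge0$ reproduces the Catalan count $\tfrac1N\binom{2N-2}{N-1}$ in that case. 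I would confirm the full formula on the small cases $N\le 3$ (the digon, the triangle, and the quadrilateral with its two triangulations, one clique and one empty cell) as a sanity check before presenting the general argument.
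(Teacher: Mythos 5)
Your proposal is correct and follows essentially the same route as the paper: the same functional equation $F=z+xF^2+(y_1+y_2)F^3/(1-F)$ obtained from the recursive decomposition, followed by Lagrange inversion with $Q(w)=w\bigl(1-xw-(y_1+y_2)w^2/(1-w)\bigr)$ and the same multinomial expansion and coefficient extraction. The extra sanity checks you describe (boundary conventions, small cases) are sensible but not a departure from the paper's argument.
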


\begin{thm}\label{thm:explicit}
  Let $P^{(\di)}_{N,\tr,\po,\cl}$ be the number of Ptolemy
  diagrams in $\Set{P}_{N,\tr,\po,\cl}$ that are invariant under
  rotation by $2\pi/\di$.  Then, for $\di\geq 2$ a divisor of $N+1$,
  $$
  P^{(\di)}_{N,\tr,\po,\cl}=%
  \binom{\frac{N+1}{\di}-1+\lfloor\frac{\tr+\po+\cl}{\di}\rfloor}%
  {\frac{N+1}{\di}-1, \lfloor\frac{\tr}{\di}\rfloor,
    \lfloor\frac{\po}{\di}\rfloor, \lfloor\frac{\cl}{\di}\rfloor}
  \binom{\lfloor\frac{N-2-\tr-\po-\cl}{\di}\rfloor}{\lfloor\frac{\po+\cl-1}{\di}\rfloor}
  $$
  if $N-2-\tr-\po-\cl\geq \po+\cl-1$ and
  \begin{enumerate}[(i)]
  \item $\di=2$ and $\tr\equiv\po\equiv\cl\equiv 0\pmod\di$, or
  \item $\di=3$ and $\tr\equiv 1\pmod\di$, $\po\equiv\cl\equiv
    0\pmod\di$, or
  \item $\di\geq 2$ arbitrary and $\tr\equiv\po\equiv\cl-1\equiv
    0\pmod\di$ or $\tr\equiv\po-1\equiv\cl\equiv 0\pmod\di$.
  \end{enumerate}
  In all other cases, $P^{(\di)}_{N,\tr,\po,\cl}=0$.
\end{thm}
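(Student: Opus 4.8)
The plan is to prove the formula by analysing the \emph{central region} of a rotation-invariant diagram and then reducing the count to a \lq linear\rq\ coefficient extraction on a polygon of size $\tfrac{N+1}{\di}$. Throughout I write $F=F(\yv,\tv,\pv,\cv)$ for the generating function of Ptolemy diagrams with distinguished base edge, where $\yv$ marks $N$, $\tv$ marks triangles, $\pv$ marks cliques and $\cv$ marks empty cells. The decomposition of Figure~\ref{fig:decomposition} translates into the functional equation
\[
F=\yv+\tv F^{2}+(\pv+\cv)\frac{F^{3}}{1-F},
\]
equivalently $F=\yv\,\phi(F)$ with $\phi(w)=\bigl(1-\tv w-(\pv+\cv)\tfrac{w^{2}}{1-w}\bigr)^{-1}$, and Theorem~\ref{thm:number} is what one obtains from this by Lagrange inversion, the factor $\tfrac1N$ being exactly the Lagrange factor. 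I abbreviate $S=\tr+\po+\cl$.

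First I would classify the central region. Rotation by $2\pi/\di$ fixes the centre of the $(N+1)$-gon, so the unique face or diagonal of the dissection containing the centre is setwise invariant. A diagonal through the centre is a diameter and forces $\di=2$; an invariant triangle has threefold symmetry and forces $\di=3$; an invariant cell of size at least four has size divisible by $\di$ and occurs for every $\di\ge2$. Since every non-central triangle, clique and empty cell avoids the centre, it lies in a free $\di$-orbit, so the residues of $\tr,\po,\cl$ modulo $\di$ are governed entirely by the central piece: $\tr\equiv\po\equiv\cl\equiv0$ in the diameter case, $\tr\equiv1,\ \po\equiv\cl\equiv0$ in the triangle case, and $\tr\equiv0$ with exactly one of $\po\equiv1$ or $\cl\equiv1$ in the cell case. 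These are precisely conditions (i)--(iii), and every other residue pattern admits no invariant diagram, giving $P^{(\di)}_{N,\tr,\po,\cl}=0$.

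Next I would set up the fundamental-domain bijection. Cutting the quotient of the $(N+1)$-gon by the rotation along a ray from the centre turns an invariant diagram into a dissection of a single sector: the central region contributes one corner, and hanging off its remaining sides are Ptolemy diagrams, one representative per $\di$-orbit, whose boundary sizes compose $\tfrac{N+1}{\di}$. Each representative occurs together with its $\di-1$ rotates, so to track the \emph{total} statistics one substitutes $F(\yv^{\di},\tv^{\di},\pv^{\di},\cv^{\di})$ for each of them, while the central clique or empty cell of size $\di p$ contributes a single factor $\pv$ or $\cv$ and ranges over $p\ge\lceil4/\di\rceil$ (and the central triangle, only for $\di=3$, a factor $\tv$; the diameter, only for $\di=2$, the unique central diagonal). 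The decisive structural point is that the central region absorbs one boundary edge -- this is the source of the shift $\tfrac{N+1}{\di}-1$ -- and pins down the global rotational phase, so that cutting \emph{linearises} the cyclic arrangement of the branches and, together with this canonical root, cancels the Lagrange factor $\tfrac1N$; this is why the symmetric count is a bare product rather than $\tfrac1N$ times a product.

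Finally I would extract coefficients: after the reduction the count is a Lagrange-type extraction on a polygon of size $\tfrac{N+1}{\di}$ in the floored parameters $\lfloor\tr/\di\rfloor,\lfloor\po/\di\rfloor,\lfloor\cl/\di\rfloor$, where the central cell supplies exactly the shifts turning $\po+\cl-1$ into $\lfloor\frac{\po+\cl-1}{\di}\rfloor$ and $N-2-S$ into $\lfloor\frac{N-2-S}{\di}\rfloor$, and the convention $\binom{n}{n}=1$ absorbs the degenerate boundary cases. The cleanest check that these shifts are correct is the $q$-analogue: writing the Theorem~\ref{thm:number} formula with $q$-multinomial and $q$-binomial coefficients and evaluating at a primitive $\di$-th root of unity, the $q$-Lucas theorem produces precisely the floors $\lfloor\cdot/\di\rfloor$, while the residual root-of-unity factors vanish exactly outside cases (i)--(iii) -- which simultaneously predicts the formula and matches the cyclic sieving phenomenon of the abstract. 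The main obstacle is the bookkeeping in the reduction step: one must prove that linearising the sector exactly cancels the cyclic over-counting of the central cell's $\di p$ sides against the $\tfrac1N$ Lagrange factor, \emph{uniformly} in $p$ and across the small-$\di$ exceptions $\di=2,3$ (where $\lceil4/\di\rceil=2$ and the diameter and triangle cases behave separately), and to follow the central cell's one-edge absorption so that the second binomial acquires the correct floored argument. Establishing this coefficient identity in full, rather than the conceptual classification, is where the real work lies.
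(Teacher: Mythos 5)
Your structural analysis of the central region is correct and coincides with the paper's Lemma~\ref{thm:decomposition}: the region containing the centre must be a diameter ($\di=2$), an invariant triangle ($\di=3$), or a clique or empty cell of size divisible by $\di$; all other regions lie in free orbits, and this yields exactly the congruence conditions (i)--(iii) together with the vanishing of $P^{(\di)}_{N,\tr,\po,\cl}$ in all remaining cases. Your fundamental-domain picture --- one pointed branch plus a list of further branches glued to the central region, with the substitution $\yv\mapsto\yv^\di$, $\tv\mapsto\tv^\di$, etc., and the pointing cancelling the factor $\frac1N$ --- is precisely how the paper arrives at its generating functions $\frac{1}{\yv}\yk\,\Set P'(\yk,\tk,\xck,\xdk)\bigl(\cdots\bigr)$ in the three regimes $\di=2$, $\di=3$, $\di\geq 4$.

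The gap is that you stop exactly where the real proof begins, and the substitute you offer is circular. The positive formula requires extracting $[\yv^N\tv^\tr\pv^\po\cv^\cl]$ from those generating functions, which the paper does by a second application of Lagrange inversion with $H(\yk)=\log\frac{1}{1-\yk}$, giving $[\yk^n]\,\yk\frac{\Set P'(\yk)}{1-\Set P(\yk)}=[\yk^{n-1}]\frac{1}{1-\yk}\bigl(\frac{Q(\yk)}{\yk}\bigr)^{-n}$ and hence the closed form $\sum\binom{n-1+\tr+\po+\cl}{n-1,\tr,\po,\cl}\binom{n-1-\tr-\po-\cl}{\po+\cl}\yk^n\tk^\tr\xck^\po\xdk^\cl$, followed by the Pascal recurrence to handle the subtracted term $\yk\Set P'$ in the exceptional cases $\di=2,3$. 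None of this bookkeeping appears in your write-up, and you concede that it is ``where the real work lies.'' Moreover, your proposed ``cleanest check'' --- evaluating the $q$-analogue of Theorem~\ref{thm:number} at a primitive $\di$\textsuperscript{th} root of unity via the $q$-Lucas theorem --- presupposes that the triple exhibits the cyclic sieving phenomenon, i.e.\ Theorem~\ref{thm:sieving}, which in the paper is itself \emph{deduced from} Theorem~\ref{thm:explicit} (and for which no independent, e.g.\ representation-theoretic, proof is known). Used as a derivation rather than as a consistency check, that argument is circular; the coefficient identity must be established directly.
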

\begin{rmk}
  Since $P^{(\frac{N+1}{\bi})}_{N,\tr,\po,\cl}=%
  P^{(\frac{N+1}{g})}_{N,\tr,\po,\cl}$, where $g$ is the greatest
  common divisor of $N+1$ and $\bi$, the assumption that $\di$ is an
  integer is not a real restriction.
\end{rmk}
\begin{rmk}
  Expressions for the generating functions of
  $P^{(\di)}_{N,\tr,\po,\cl}$ are given in
  Lemma~\ref{thm:decomposition}.
\end{rmk}
As an illustration, the Ptolemy diagrams on the twelve-gon that are
invariant under rotation by $\pi/2$ are shown in
Figure~\ref{fig:rotation}.

Since $\Diag A^\perp=\Sigma\nc\Diag A=\tau\nc\Diag A$, this theorem
also determines the number of Ptolemy diagrams whose corresponding
subcategory is invariant under taking perpendicular subcategories $b$
times:
\begin{cor}
  Let $P^{\perp^\bi}_{N,\tr,\po,\cl}$ be the number of Ptolemy
  diagrams in $\Set{P}_{N,\tr,\po,\cl}$ invariant under $\bi$-fold
  application of taking perpendiculars.  Let $\di=\frac{N+1}{\bi}$,
  then
  \begin{align*}
    P^{\perp^\bi}_{N,\tr,\po,\cl} &= 2^e
    \binom{\frac{N+1}{2}-1+\frac{\tr}{2}+e}%
    {\frac{N+1}{2}-1, \frac{\tr}{2}, e}
    \binom{\frac{N+1}{2}-2-\frac{\tr}{2}-e}{e-1},%
    \intertext{if $\bi$ is odd, $\di=2$, $\tr\equiv 0\mymod\di$ and
      $\po=\cl=e$, in which case the the central region is
      degenerate, and}%
    P^{\perp^\bi}_{N,\tr,\po,\cl} &= P^{(\di)}_{N,\tr,\po,\cl},%
  \end{align*}
  if $\bi$ is odd, $\di=3$, $\tr\equiv 1\pmod\di$ and $\po=\cl=0$, in
  which case the Ptolemy diagram is a triangulation, or if $\bi$ is
  even.  In all other cases, there are no such Ptolemy diagrams.
\end{cor}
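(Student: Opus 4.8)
The plan is to reduce the statement about $b$-fold perpendiculars to the already-established count of rotation-invariant diagrams in Theorem~\ref{thm:explicit}, exploiting the identity $\Diag A^\perp = \tau\,\nc\,\Diag A$ stated in the excerpt. First I would analyse the operator $\phi := \tau\circ\nc$ by computing its square: since $\nc$ commutes with rotation and $\nc\nc\Diag A = \Diag A$ for any Ptolemy diagram, we get $\phi^2 = \tau\,\nc\,\tau\,\nc = \tau^2\,\nc\nc = \tau^2$, so $\phi^2$ is rotation by $2\cdot 2\pi/(N+1)$. Hence the behaviour of $\phi^b$ depends sharply on the parity of $b$: when $b$ is even, $\phi^b = \tau^b$ is a genuine rotation, so being fixed by $\phi^b$ is literally being fixed by rotation by $2\pi b/(N+1)$, which after replacing $b$ by $\gcd(N+1,b)$ gives $P^{(\di)}_{N,\tr,\po,\cl}$ with $\di = (N+1)/b$. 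This disposes of the even case immediately.

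The substance lies in the odd case, where $\phi^b = \tau^{b-1}\phi = \tau^{b}\,\nc$ genuinely involves the crossing-complement $\nc$ and cannot be simplified to a pure rotation. Here I would set $\di = (N+1)/\gcd(N+1,b)$ and observe that fixed points of $\phi^b$ satisfy $\tau^b\,\nc\,\Diag A = \Diag A$, equivalently $\nc\,\Diag A = \tau^{-b}\Diag A$. The key structural step is that $\nc$ swaps cliques and empty cells while fixing triangles and preserving the dissection skeleton (this is the \lq\lq replacing every clique by an empty cell and vice versa\rq\rq\ operation noted in the introduction). Therefore a $\phi^b$-invariant diagram must, after the rotation $\tau^b$ is accounted for, have its cliques and empty cells interchanged; this forces $\po = \cl =: e$ and constrains the central region of the rotationally-symmetric structure. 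I expect the main obstacle to be precisely this local analysis at the centre of symmetry: one must determine, for each admissible $\di$, which central configuration is compatible with a symmetry that both rotates and swaps the two colours, and to show only $\di=2$ (central region degenerate, giving the factor $2^e$ from the $e$ free clique/cell choices that get paired) and $\di=3$ (forcing a triangulation, where $\nc$ acts trivially so $\phi^b$ reduces to $\tau^b$ and one recovers $P^{(\di)}_{N,\tr,\po,\cl}$) survive.

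To make the $\di=2$ formula explicit I would specialise the generating-function computation of Lemma~\ref{thm:decomposition} to a centre fixed by the colour-swapping half-turn. The colour swap pairs the $\po$ cliques with the $\cl$ empty cells in the two halves of the diagram, so the outer half is an arbitrary Ptolemy diagram on roughly half the vertices with $\tr/2$ triangles and $e$ of the paired four-plus regions, while the choice of colour within each of the $e$ paired regions is free, contributing the factor $2^e$. Matching the resulting multinomial and binomial against the half-size parameters $\tfrac{N+1}{2}-1$, $\tfrac{\tr}{2}$, $e$ then yields the stated closed form. Finally I would verify the nonexistence claim in all remaining cases by checking that the colour-swap constraint $\po=\cl$ together with the divisibility conditions inherited from Theorem~\ref{thm:explicit} leaves no admissible central configuration, so that $P^{\perp^\bi}_{N,\tr,\po,\cl}=0$ outside the two enumerated families.
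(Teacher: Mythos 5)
Your proposal is correct and follows essentially the same route as the paper: reduce the even case to pure rotation via $(\tau\nc)^2=\tau^2$, and in the odd case use the fact that $\nc$ swaps cliques and empty cells to force the central region to be degenerate ($\di=2$, with the outer half an arbitrary Ptolemy diagram glued to its $\nc$-image, yielding the $2^e$ as the free colour choices, which the paper writes as $\sum_{\ell=0}^e\binom{e}{\ell}$) or a triangle ($\di=3$, forcing $\Diag A_1=\nc^3\Diag A_1$ and hence a triangulation). No substantive differences.
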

As an illustration, the Ptolemy diagrams on the hexagon that are
invariant under taking perpendiculars three times are shown in
Figure~\ref{fig:perpendiculars}.
\begin{figure}[b]
  \begin{overpic}[height=\textwidth,clip=true,trim=55 56 463 540,angle=270]{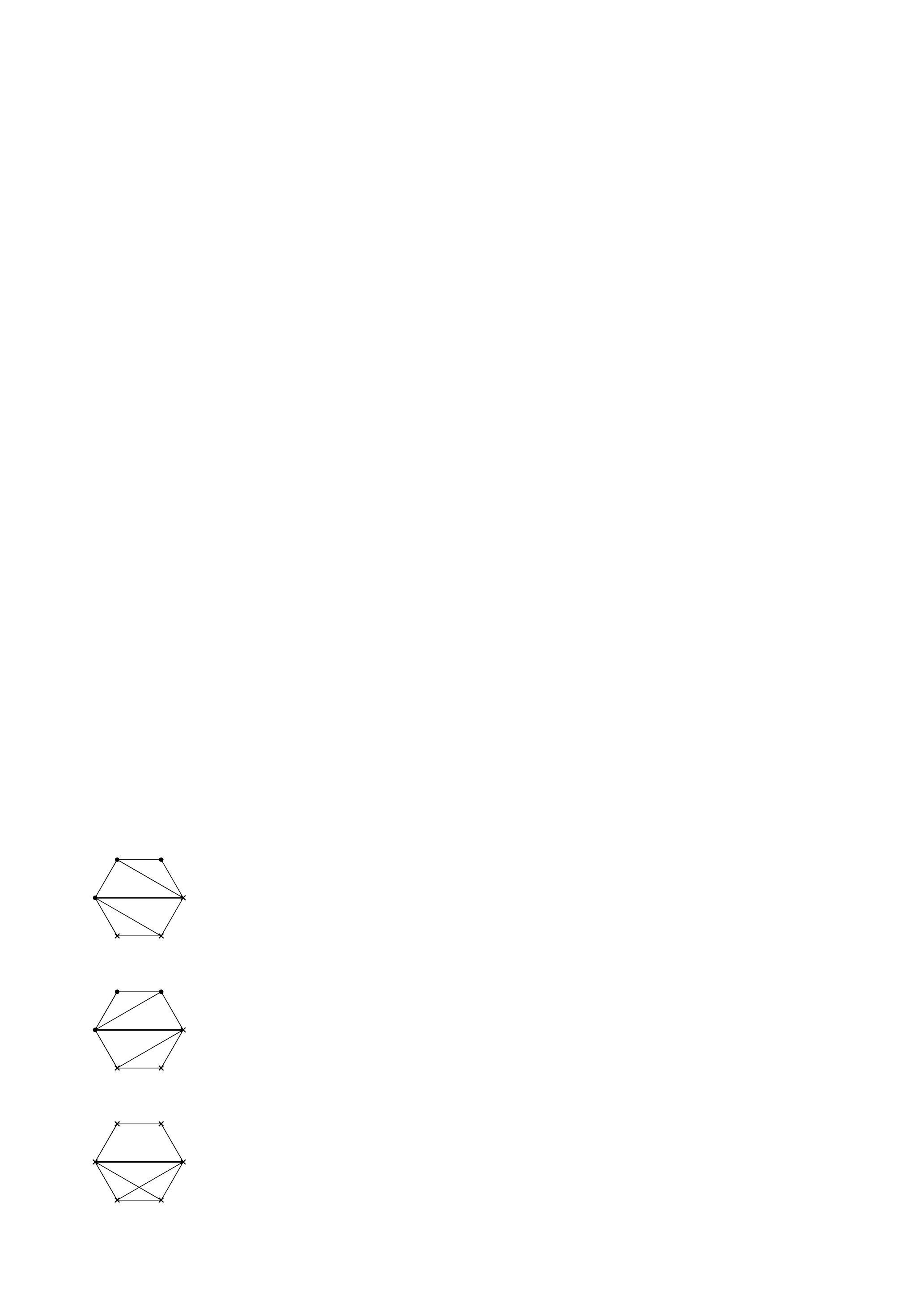}
  \put(25,0){$P^{\perp^3}_{5, 0, 1, 1}=6$}
  \put(78,0){$P^{\perp^3}_{5, 4, 0, 0}=6$}
  \end{overpic}
  \caption{Ptolemy diagrams on the hexagon invariant under taking
    perpendiculars three times.  Crosses indicate different choices
    of base vertices.}
  \label{fig:perpendiculars}
\end{figure}
\begin{proof}
  If $\bi$ is even we have $\Diag A^{\perp^\bi}=(\tau\nc)^\bi\Diag
  A=\tau^\bi\Diag A$, since $\nc$ is an involution on Ptolemy
  diagrams.

  If $\bi$ is odd, $\Diag A^{\perp^\bi}=\tau^\bi\nc\Diag A$.  Thus
  the central region must be degenerate or a triangle.  If it is
  degenerate, we can glue any Ptolemy diagram $\Diag A$ on one side,
  and $\nc\Diag A$ on the other side of the edge, to obtain an
  invariant diagram.  Thus
  \begin{align*}
    P^{\perp^\bi}_{N,\tr, e, e}
    &=\frac{N+1}{2}\sum_{\po=0}^e P_{\frac{N+1}{2},\frac{\tr}{2},\po,e-\po}\\
    &=\sum_{\po=0}^e %
    \binom{\frac{N+1}{2}-1+\frac{\tr}{2}+e}%
    {\frac{N+1}{2}-1, \frac{\tr}{2}, e}%
    \binom{e}{\po}%
    \binom{\frac{N+1}{2}-2-\frac{\tr}{2}-e}{e-1},
  \end{align*}
  as claimed.

  Otherwise, we have to glue three Ptolemy diagrams $\Diag A_1$,
  $\Diag A_2$ and $\Diag A_3$ onto the triangle, one on each side.
  It follows that $\Diag A_1=\nc\Diag A_2=\nc^2\Diag A_3=\nc^3\Diag
  A_1$, and thus that $\Diag A_1=\Diag A_2=\Diag A_3$ is a
  triangulation.
\end{proof}

As another corollary we obtain a (relatively) explicit expression for
the number of Ptolemy diagrams up to rotation.  A (relatively
complicated) expression for the corresponding generating function was
already given in~\cite[Proposition~3.4]{HolmJorgensenRubey2010}.
\begin{cor}
  The number of Ptolemy diagrams in $\Set{P}_{N,\tr,\po,\cl}$ up to
  rotation is
  \begin{equation*}
    \frac{1}{N+1}\sum_{\di|N+1} \phi(\di) P^{(\di)}_{N,\tr,\po,\cl},
  \end{equation*}
  where we set $P^{(1)}_{N,\tr,\po,\cl} = P_{N,\tr,\po,\cl}$.
\end{cor}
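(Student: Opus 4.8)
The plan is to apply the Cauchy--Frobenius (Burnside) orbit-counting lemma to the cyclic group $C$ of order $N+1$ generated by rotation of the $(N+1)$-gon, acting on the finite set $\Set P_{N,\tr,\po,\cl}$. The lemma expresses the number of orbits as
$$
\frac{1}{\size C}\sum_{c\in C}\size{\Set P_{N,\tr,\po,\cl}^{\,c}},
$$
where $\size C=N+1$ and $\Set P_{N,\tr,\po,\cl}^{\,c}$ denotes the set of diagrams fixed by $c$. The whole argument then reduces to evaluating each fixed-point count and reorganising the sum according to the order of the group elements.

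First I would argue that the fixed-point set of a group element depends only on its order. An element $c\in C$ of order $\di$ (necessarily with $\di\mid N+1$) generates the unique cyclic subgroup of $C$ of order $\di$, and a diagram is fixed by $c$ precisely when it is fixed by this entire subgroup, that is, when it is invariant under rotation by $2\pi/\di$. Hence $\size{\Set P_{N,\tr,\po,\cl}^{\,c}}=P^{(\di)}_{N,\tr,\po,\cl}$ for every $c$ of order $\di$, with the convention $P^{(1)}_{N,\tr,\po,\cl}=P_{N,\tr,\po,\cl}$ handling the identity. The values $P^{(\di)}_{N,\tr,\po,\cl}$ are supplied directly by Theorem~\ref{thm:explicit} for $\di\geq 2$ and by Theorem~\ref{thm:number} for $\di=1$.

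Next I would partition $C$ according to the orders of its elements. Since $C$ is cyclic of order $N+1$, for each divisor $\di$ of $N+1$ there are exactly $\phi(\di)$ elements of order $\di$. Grouping the Burnside sum by order therefore gives
$$
\frac{1}{N+1}\sum_{c\in C}\size{\Set P_{N,\tr,\po,\cl}^{\,c}}
=\frac{1}{N+1}\sum_{\di\mid N+1}\phi(\di)\,P^{(\di)}_{N,\tr,\po,\cl},
$$
which is the asserted formula.

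Given the preceding theorems, there is essentially no hard step here: the only point requiring a moment's care is the observation that any two generators of the same cyclic subgroup have identical fixed-point sets, so that $P^{(\di)}_{N,\tr,\po,\cl}$ correctly accounts for all $\phi(\di)$ elements of order $\di$ at once. Everything else is a verbatim application of the orbit-counting lemma together with the standard tally of elements of each order in a cyclic group.
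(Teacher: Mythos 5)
Your proposal is correct and follows exactly the paper's own argument: the Cauchy--Frobenius orbit-counting formula combined with the fact that a cyclic group of order $N+1$ has $\phi(\di)$ elements of each order $\di\mid N+1$, each fixing precisely the $P^{(\di)}_{N,\tr,\po,\cl}$ diagrams invariant under rotation by $2\pi/\di$. The only difference is that you spell out explicitly that generators of the same cyclic subgroup share a fixed-point set, which the paper leaves implicit.
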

Note however that this is not as explicit as it may seem, since
summands vanish depending on the congruence class modulo $\di$.
\begin{proof}
  For every $\di|N+1$ there are $\phi(\di)$ elements of order $\di$
  in the cyclic group of rotations of the $(N+1)-gon$, and for each
  of these rotations there are $P^{(\di)}_{N,\tr,\po,\cl}$ Ptolemy
  diagrams that are invariant.  The corollary now follows from the
  Cauchy-Frobenius formula for the number of orbits
  $$
  \frac{1}{N+1}\sum_{\bi=0}^{N} \#\{\text{Ptolemy diagrams fixed by
    $\tau^\bi$}\}.
  $$
\end{proof}

The remainder of this section is dedicated to a rephrasing of
Theorems~\ref{thm:number} and~\ref{thm:explicit} as a cyclic sieving
phenomenon.
\begin{dfn}
  For $0\le k \le n$ the \Dfn{$q$-binomial coefficient} is
  $$
  \qbinom{n}{k}=\frac{\qi{n}!}{\qi{k}!\qi{n-k}!},
  $$
  where $\qi{n}!=\qi{n}\qi{n-1}\cdots\qi{1}$ and $\qi n =
  1+q\dots+q^{n-1}$.  Analogously, the \Dfn{$q$-multinomial
    coefficient} is
  $$
  \qbinom{n_1+n_2+\dots n_\ell}{n_1, n_2,\dots,
    n_\ell}=\frac{\qi{n_1+n_2+\dots n_\ell}!}{\qi{n_1}!\qi{n_2}!\dots
    \qi{n_\ell}!},
  $$
  where $n_1, n_2,\dots n_\ell$ are non-negative integers.
\end{dfn}

Probably \emph{the} reason why cyclic sieving is called a
\emph{phenomenon}, is that the cyclic sieving polynomial $X(q)$ is
frequently obtained from the ordinary counting function merely by
replacing binomials with $q$-binomials and the like.  This is also
the case in our situation:%
\begin{thm}\label{thm:sieving}
  Let $\Set{P}_{N,\tr,\po,\cl}$ be the set of Ptolemy diagrams on the
  $(N+1)$-gon, with $\tr$ triangles, $\po$ cliques of size at least
  four and $\cl$ empty cells of size at least four.  Let $\tau$ be
  the operation of rotation acting on this set, and let
  $$
  P_{N,\tr,\po,\cl}(q)=\frac{1}{\qi{N}}%
  \qbinom{N-1+\tr+\po+\cl}{N-1,\tr,\po,\cl}%
  \qbinom{N-2-\tr-\po-\cl}{\po+\cl-1},
  $$
  where we set $\qbinom{n}{n}=1$ for $n\in\integers$.

  Then $\big(\Set{P}_{N,\tr,\po,\cl}, \langle\tau\rangle,
  P_{N,\tr,\po,\cl}(q)\big)$ exhibits the cyclic sieving phenomenon.
\end{thm}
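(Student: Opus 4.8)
The strategy is to apply the alternative description of the cyclic sieving polynomial given in Proposition~2.1 (of \cite{MR2087303}, reproduced above), which says that if we set
$$
P_{N,\tr,\po,\cl}(q)=\sum_{\ell=0}^{N-1} a_\ell\, q^\ell,
$$
where $a_\ell$ counts the orbits of $\langle\tau\rangle$ on $\Set{P}_{N,\tr,\po,\cl}$ whose stabiliser order divides $\ell$, then the cyclic sieving phenomenon holds automatically. Thus it suffices to verify that the explicitly given polynomial $P_{N,\tr,\po,\cl}(q)$, reduced modulo $q^N-1$ to degree at most $N-1$, has exactly these coefficients $a_\ell$. Equivalently, and more directly, I would verify the defining evaluation condition: for every divisor $\di$ of $N$ (the order of the rotation group $\langle\tau\rangle$ acting on diagrams with a fixed base edge is $N$, since the $(N+1)$-gon has $N+1$ vertices but one edge is distinguished\,---\,here one must be careful about the precise order of $\tau$), and every $\di^{\text{th}}$ primitive root of unity $\omega_\di$, one has
$$
P_{N,\tr,\po,\cl}(\omega_\di)=\big|\Set{P}_{N,\tr,\po,\cl}^{\tau^{N/\di}}\big|.
$$

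First I would pin down the group action precisely: rotation $\tau$ of the $(N+1)$-gon generates a cyclic group, and I must match its order and the meaning of a rotation by $2\pi/\di$ in Theorem~\ref{thm:explicit} with the index $N/\di$ appearing in the fixed-point count. The right-hand side is then exactly $P^{(\di)}_{N,\tr,\po,\cl}$ from Theorem~\ref{thm:explicit} (for the appropriate divisor), so the content of the theorem reduces to a purely arithmetic identity. The core step is therefore to evaluate the $q$-analogue at roots of unity. The standard tool is the $q$-Lucas theorem, which expresses a $q$-binomial coefficient $\qbinom{a}{b}$ evaluated at a primitive $\di^{\text{th}}$ root of unity as a product of an ordinary binomial $\binom{\lfloor a/\di\rfloor}{\lfloor b/\di\rfloor}$ in the ``block'' parts and a residual $q$-binomial in the remainders; the latter vanishes unless the remainder of $b$ does not exceed the remainder of $a$. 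Applying $q$-Lucas to each of the two $q$-(multi)nomial factors and to the factor $1/\qi{N}$, and carefully tracking the $\lfloor\,\cdot\,\rfloor$'s of $N-1,\tr,\po,\cl$ and of $N-2-\tr-\po-\cl$ and $\po+\cl-1$, should reproduce precisely the two ordinary binomial factors of $P^{(\di)}_{N,\tr,\po,\cl}$.

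The main obstacle is twofold. First, the cyclotomic factor $1/\qi{N}$ is singular at a $\di^{\text{th}}$ root of unity whenever $\di\mid N$, so the evaluation is genuinely a limit: I would have to show that a compensating zero arises from one of the $q$-binomial factors and compute the ratio by l'Hôpital or by a refined $q$-Lucas argument (treating $\qi N=\qi{\di}\cdot(\text{regular factor})$ and cancelling the $\qi\di\to 0$ against the corresponding vanishing block). Getting this cancellation to yield the clean factor-of-one prefactor\,---\,rather than an extra multiplicative constant\,---\,is the delicate point, and it is here that the exact congruence conditions (i)--(iii) and the vanishing statement ``in all other cases $P^{(\di)}_{N,\tr,\po,\cl}=0$'' of Theorem~\ref{thm:explicit} must emerge: the residual $q$-binomials at the root of unity vanish precisely when the remainder inequalities fail, matching the congruence dichotomy. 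Second, one must confirm that no spurious contribution appears for divisors $\di$ with $\di\nmid N$, and that the degree-reduction modulo $q^N-1$ does not alter the evaluations. Once the $q$-Lucas bookkeeping is set up correctly, the verification is mechanical, so I expect essentially all the difficulty to be concentrated in the singular evaluation of $1/\qi{N}$ and in aligning the floor-function arithmetic with the case distinction of Theorem~\ref{thm:explicit}.
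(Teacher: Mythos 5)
Your overall strategy---reduce the theorem to the identity $P_{N,\tr,\po,\cl}(\omega)=P^{(\di)}_{N,\tr,\po,\cl}$ and verify it by evaluating the $q$-binomial factors at roots of unity via the $q$-Lucas theorem---is exactly the paper's approach. However, your setup contains a concrete error that derails your analysis of what you call the main obstacle. The cyclic group $\langle\tau\rangle$ has order $N+1$, not $N$: rotation acts on the $(N+1)$-gon together with its distinguished base edge, and the $N+1$ edge positions form a single free orbit, so $\tau^{N+1}=\mathrm{id}$ and no smaller power is. Consequently the relevant evaluations are at primitive $\di$\textsuperscript{th} roots of unity for $\di\mid N+1$ (matching Theorem~\ref{thm:explicit}, which assumes $\di$ divides $N+1$), not for $\di\mid N$. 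With this correction your ``delicate point'' evaporates: since $N\equiv -1\pmod\di$ we have $\qi[\omega]{N}=\qi[\omega]{\di-1}\neq 0$ for $\di\geq 2$, so the factor $1/\qi{N}$ is perfectly regular at $\omega$ and no l'H\^opital-type cancellation or singular limit is needed anywhere.

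The flip side is that the part you dismiss as ``mechanical bookkeeping'' is where the actual content of the proof lies. The paper devotes a separate lemma to evaluating $\frac{1}{\qi[\omega]{N}}\qbinom[\omega]{N-1+\tr+\po+\cl}{N-1,\tr,\po,\cl}$, splitting the $q$-multinomial into a product of $q$-binomials and running a case analysis on the residues of $\tr,\po,\cl$ modulo $\di$; combined with the $q$-Lucas evaluation of the second factor $\qbinom{N-2-\tr-\po-\cl}{\po+\cl-1}$, this is precisely what produces the congruence conditions (i)--(iii) of Theorem~\ref{thm:explicit} and the vanishing in all remaining cases (including the subtle subcase $\di>2$, $\tr\equiv\po\equiv\cl\equiv 0\pmod\di$, where the multinomial factor need not vanish but the second factor does). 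So the proposal is on the right track and salvageable, but as written it spends its effort on a non-existent singularity while underestimating the genuine case analysis.
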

\begin{rmk}
  Putting $\po=0$ in the theorem above we obtain polygon dissections
  with a given number of regions.  By contrast, Victor Reiner, Denis
  Stanton and Dennis White~\cite{MR2087303} showed that polygon
  dissections with a given number of diagonals also exhibit the
  cyclic sieving phenomenon.  However, so far we did not manage to
  find a common generalisation, not even for the formula giving the
  total number of dissections of the $n$-gon with $k$ diagonals, {\it
    i.e.}
  $$
  \frac{1}{n+k}\binom{n+k}{k+1}\binom{n-3}{k}
  $$
  and the formula in Theorem~\ref{thm:number}.

  A collection of (partially conjectural) instances of the cyclic
  sieving phenomenon that involve various subsets of (non-crossing)
  diagonals of the $n$-gon was given by Alan Guo in \cite{Guo2010}.
  Two common features of these and also the one described in the
  present article are that the cyclic sieving polynomial is a simple
  product of $q$-binomial coefficients, and to date, no
  representation theoretic proof along the lines of
  \cite[Lemma~2.4]{MR2087303} is available\dots
\end{rmk}

\section{Counting Ptolemy diagrams}
In this section we determine the total number of Ptolemy diagrams as
well as the number of Ptolemy diagrams invariant under rotation by a
given angle.  That is, we provide proofs of Theorem~\ref{thm:number}
and Theorem~\ref{thm:explicit}.

We do so by exhibiting equations for the generating functions, whose
coefficients we then extract using Lagrange inversion:
\begin{thm}[Lagrange inversion]
  Let $F(\yv)$ be a formal power series with $[\yv^0] F(\yv)=0$ and
  $[\yv^1] F(\yv)\neq 0$.  Let $F^{(-1)}(\yv)$ its compositional
  inverse and $H(\yv)$ an arbitrary formal power series.  Then the
  coefficient of $\yv^n$ in $H(F^{(-1)}(\yv))$ is
  $$
  [\yv^n]H(F^{(-1)}(\yv))=\frac{1}{n}[\yv^{n-1}]H'(\yv)\left(\frac{F(\yv)}{\yv}\right)^{-n}.
  $$
\end{thm}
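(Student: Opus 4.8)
The plan is to prove this by the formal residue calculus. For a formal Laurent series $\phi(z)=\sum_k a_k z^k$ write $\operatorname{res}\phi = a_{-1}=[z^{-1}]\phi(z)$, so that $[z^n]\psi(z)=\operatorname{res}\big(z^{-n-1}\psi(z)\big)$ for every formal power series $\psi$. I would first record two elementary properties of $\operatorname{res}$ on the field of formal Laurent series. The first is \emph{integration by parts}: $\operatorname{res}(\phi')=0$ for every formal Laurent series $\phi$, because the coefficient of $z^{-1}$ in $\phi'$ comes from the $z^0$-term of $\phi$ and is $0\cdot a_0=0$; consequently $\operatorname{res}(\phi\psi')=-\operatorname{res}(\phi'\psi)$. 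The second is the \emph{change-of-variables formula}: since $F$ satisfies $[z^0]F=0$ and $[z^1]F\neq 0$, for every formal Laurent series $\phi$ one has
$$\operatorname{res}_w\big(\phi(F(w))\,F'(w)\big)=\operatorname{res}_z\,\phi(z).$$

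Granting these, write $G:=F^{(-1)}$, so that $G(F(w))=w$, and the computation is short. Starting from $[z^n]H(G(z))=\operatorname{res}_z\big(z^{-n-1}H(G(z))\big)$, I apply the change-of-variables formula with $\phi(z)=z^{-n-1}H(G(z))$; because $G(F(w))=w$ the composite $H(G(\cdot))$ collapses to $H(w)$, giving
$$[z^n]H(G(z))=\operatorname{res}_w\big(F(w)^{-n-1}H(w)\,F'(w)\big).$$
Next I recognise $F(w)^{-n-1}F'(w)=\tfrac{d}{dw}\big(-\tfrac1n F(w)^{-n}\big)$ and apply integration by parts, which yields
$$[z^n]H(G(z))=\tfrac1n\operatorname{res}_w\big(H'(w)F(w)^{-n}\big).$$
Finally I factor $F(w)^{-n}=w^{-n}\big(F(w)/w\big)^{-n}$, where $\big(F(w)/w\big)^{-n}$ is a genuine formal power series since $F(w)/w$ has nonzero constant term $[z^1]F$. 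Extracting the $w^{-1}$-coefficient then gives $\tfrac1n[w^{n-1}]\big(H'(w)(F(w)/w)^{-n}\big)$, which is exactly the claimed formula.

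The main obstacle is establishing the change-of-variables formula rigorously over formal Laurent series, since $\phi(F(w))$ requires substituting $F$, which has zero constant term, into negative powers of its argument. By linearity and continuity in the $z$-adic topology it suffices to verify the formula for $\phi(z)=z^k$ with $k\in\integers$. For $k\neq -1$ the left-hand side is $\operatorname{res}_w\big(\tfrac{1}{k+1}(F^{k+1})'\big)=0$ by the integration-by-parts property, matching $\operatorname{res}_z z^k=0$. The delicate case is $k=-1$: writing $F(w)=[z^1]F\cdot w\,u(w)$ with $u(0)=1$, a logarithmic-derivative computation gives $F^{-1}F'=(\log F)'=\tfrac1w+\tfrac{u'}{u}$, and since $u'/u$ is an ordinary power series its residue vanishes, leaving $\operatorname{res}_w(F^{-1}F')=1=\operatorname{res}_z z^{-1}$. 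Assembling the two cases establishes the change-of-variables formula and completes the proof.
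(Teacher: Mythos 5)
Your proof is correct and complete. The residue lemma ($\operatorname{res}\phi'=0$ for any formal Laurent series $\phi$), integration by parts, and the change-of-variables formula are all established soundly, and you correctly isolate the only genuinely delicate points: the meaning of $\phi(F(w))$ when $\phi$ has negative-degree terms, and the case $k=-1$, which is exactly where the hypothesis $[\yv^1]F\neq 0$ enters via $F^{-1}F'=\tfrac1w+\tfrac{u'}{u}$ with $u'/u$ an honest power series. Be aware, though, that the paper does not prove this theorem at all: its ``proof'' is only a pointer to \cite[Corollary~5.4.3]{EC2}. So your argument is not an alternative to the paper's route but a self-contained substitute for the citation; it is in essence the classical residue-calculus (Jacobi-style) proof, which is also the standard argument behind the cited result. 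What your version buys is that the statement no longer rests on an external reference, and the mechanism of the proof (residues are invariant under order-one substitution, plus integration by parts to trade $H$ for $H'$) becomes visible; what the citation buys is brevity, which is all the paper needs since Lagrange inversion is only a tool here. Two minor stylistic remarks: the intermediate expression $(\log F)'$ is not itself defined as a formal power series (as $F$ has zero constant term), so it is cleaner to verify $F^{-1}F'=\tfrac1w+\tfrac{u'}{u}$ directly from $F=cwu$, which is what your computation in fact does; and the appeal to ``linearity and continuity'' deserves the one-line justification that for $k\geq 0$ the series $F^kF'$ has no $w^{-1}$ term, so only finitely many terms of $\phi$ contribute to the residue.
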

\begin{proof}
  A proof may be found, for example, in~\cite[Corollary 5.4.3]{EC2}.
\end{proof}

\begin{proof}[Proof of Theorem~\ref{thm:number}]

  Let $P_{N,\tr,\po,\cl}$ be the number of Ptolemy diagrams on the
  $(N+1)$-gon with $\tr$ triangles, $\po$ cliques of size at least
  four and $\cl$ empty cells of size at least four.  Then the
  ordinary generating function for Ptolemy diagrams is
  $$
  \Set P(\yv)= \Set P(\yv,\tv,\pv,\cv)=%
  \sum_{N\ge 1,\tr,\po,\cl\ge 0}P_{N,\tr,\po,\cl}\; \yv^N \tv^\tr
  \pv^\po \cv^\cl .
  $$
  Translating the recursive description for the set of Ptolemy
  diagrams given in the introduction into an equation for their
  generating function we obtain
  $$
  \Set P(\yv)= \yv+\tv\Set P(\yv)^2+ (\pv+\cv)\frac{\Set
    P(\yv)^3}{1-P(\yv)},
  $$
  or equivalently,
  $$
  \Set P(\yv)%
  \left(1-\tv\Set P(\yv)%
    -(\pv+\cv)\frac{\Set P(\yv)^2}{1-\Set P(\yv)}\right) = \yv.
  $$
   
  We are now able to apply Lagrange inversion to obtain formulae for
  the coefficients of $\Set P(\yv)$: setting
  $Q(\yv)=\yv\left(1-\tv\yv-(\pv+\cv)\frac{\yv^2}{1-\yv}\right)$ we
  have $\yv=Q(\Set P)$, {\it i.e.} $Q$ is the compositional inverse
  of $\Set P$.  Therefore
  \begin{equation*}
    [\yv^N]\Set P(\yv)%
    =\frac{1}{N}[\yv^{N-1}]\left(\frac{Q(\yv)}{\yv}\right)^{-N}.
  \end{equation*}
  Applying the multinomial theorem 
  $$(1-\tv-\pv-\cv)^{-N}=%
  \sum_{\tr,\po,\cl}\binom{N-1+\tr+\po+\cl}{N-1,\tr,\po,\cl} \tv^\tr
  \pv^\po \cv^\cl
  $$
  we find
  \begin{align*}
    \left(\frac{Q(\yv)}{\yv}\right)^{-N}&=\left(1-\tv\yv-(\pv+\cv)\frac{\yv^2}{1-\yv}\right)^{-N}\\
    &=\sum_{\tr,\po,\cl} \binom{N-1+\tr+\po+\cl}{N-1,\tr,\po,\cl}%
    (\tv\yv)^\tr \pv^\po \cv^\cl \frac{\yv^{2(\po+\cl)}}{(1-\yv)^{\po+\cl}}\\
    &=\sum_{\tr,\po,\cl} \binom{N-1+\tr+\po+\cl}{N-1,\tr,\po,\cl}%
    \tv^\tr \pv^\po \cv^\cl \yv^{\tr+2(\po+\cl)}\\
    &\phantom{=\sum_{\tr,\po,\cl}}\sum_{i}\binom{\po+\cl-1+i}{\po+\cl-1}\yv^i.
  \end{align*}
  Extracting the coefficient of $\yv^{N-1}$ by setting
  $i=N-1-\tr-2(\po+\cl)$ we obtain the desired formula
  \begin{align*}
    P_{N,\tr,\po,\cl}&=[\yv^N\tv^\tr \pv^\po \cv^\cl ]\Set P(\yv,\tv,\pv,\cv)\\
    &=\frac{1}{N}\binom {N-1+\tr+\po+\cl}{N-1,\tr,\po,\cl}
    \binom{N-\tr-\po-\cl-2}{\po+\cl-1}.
  \end{align*}
\end{proof}

\begin{lem}\label{thm:decomposition}
  Let
  $$
  \Set{P}(\yv,\tv,\pv,\cv)=\sum_{N\geq 1,\tr,\po,\cl\geq 0} %
  P_{N,\tr,\po,\cl}\; \yv^N \tv^\tr \pv^\po \cv^\cl
  $$ 
  be the generating function for Ptolemy diagrams.  Then the
  generating function for Ptolemy diagrams that are invariant under
  rotation by $2\pi/\di$ equals
  \begin{align*}
    &\frac{1}{\yv}\yk\,\Set P'(\yk, \tk, \xck, \xdk) %
    \left(1+\Set P(\yk, \tk, \xck, \xdk)%
      \frac{\pv+\cv}{1-\Set P(\yk, \tk, \xck, \xdk)}\right)
    &\text{ for $\di=2$,}\\
    &\frac{1}{\yv}\yk\,\Set P'(\yk, \tk, \xck, \xdk) %
    \left(\tv+\Set P(\yk, \tk, \xck, \xdk)%
      \frac{\pv+\cv}{1-\Set P(\yk, \tk, \xck, \xdk)}\right)%
    &\text{ for $\di=3$,}\\
    &\frac{1}{\yv}\yk\,\Set P'(\yk, \tk, \xck, \xdk) %
    \frac{\pv+\cv} {1-\Set P(\yk, \tk, \xck, \xdk)}%
    &\text{ for $\di\geq 4$.}
  \end{align*}
  In these formulae, we set $\yk=\yv^\di$, $\tk=\tv^\di$,
  $\xck=\pv^\di$ and $\xdk=\cv^\di$, and the derivative is with
  respect to $\yk$.
\end{lem}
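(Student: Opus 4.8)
The plan is to decompose an invariant Ptolemy diagram according to its \emph{central region}, the face of the underlying polygon dissection that contains the centre of the $(N+1)$-gon; this is the symmetric analogue of the recursion $\Set P=\yv+\tv\Set P^2+(\pv+\cv)\frac{\Set P^3}{1-\Set P}$ used for Theorem~\ref{thm:number}. Writing $\rho$ for rotation by $2\pi/\di$, which has order $\di$ and, since $\di\geq 2$ divides $N+1$, acts freely on the vertices and boundary edges, the centre is fixed by $\rho$ and hence so is the central region. First I would classify it: either the centre lies on a diagonal, which forces that diagonal to be a diameter and so $\di=2$ (weight $1$, no cell variable); or it lies in the interior of a face of size $k$, in which case $\rho$ permutes the $k$ vertices of that face in orbits of size $\di$, so $\di\mid k$. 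A face of size $3$ occurs exactly when $\di=3$ (a central triangle, weight $\tv$), and a face of size $k\geq 4$ with $\di\mid k$ is a central clique or empty cell (weight $\pv+\cv$), requiring $k=\di s$ with $s\geq 1$ when $\di\geq 4$ but $s\geq 2$ when $\di\in\{2,3\}$. This trichotomy is precisely the three cases of the Lemma.

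Next I would set up the reconstruction bijection. Once the central region is fixed, the diagram is obtained by gluing a Ptolemy diagram onto each of the $k$ edges of the central cell (a diameter and a triangle being treated as cells with $k=2$ and $k=3$ edges). Invariance means these glued diagrams are constant on the $s=k/\di$ orbits of edges, so the whole diagram is determined by $s$ \emph{fundamental} diagrams $D_1,\dots,D_s$, one per orbit, the remaining edges carrying the $\rho$-translates. Since each $D_i$ appears together with its $\di-1$ rotated copies, every exponent in its weight $\yv^N\tv^\tr\pv^\po\cv^\cl$ is multiplied by $\di$; with $\yk=\yv^\di,\tk=\tv^\di,\xck=\pv^\di,\xdk=\cv^\di$ this is exactly one factor $\Set P(\yk,\tk,\xck,\xdk)$ per fundamental diagram. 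A short vertex count, in which the $k$ central vertices are shared among the glued diagrams, gives the identity $N+1=\di\sum_{i=1}^{s}N_i$, where $N_i$ is the $\yk$-degree of $D_i$; this is the source of the prefactor $\tfrac1\yv$, since $\yv^N=\yv^{-1}\yk^{\sum N_i}$.

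The heart of the argument is to account for the distinguished base edge, which is what turns the bare product $\Set P(\yk)^s$ into $\yk\,\Set P'(\yk)\,\Set P(\yk)^{s-1}$. The fundamental diagrams are arranged \emph{cyclically} around the central cell, and the base edge is one of the $\sum_i N_i$ boundary edges of this fundamental domain. Pointing at the base edge roots the cyclic arrangement: it singles out the diagram $D_{i_0}$ containing it, records the position of the base edge among the $N_{i_0}$ boundary edges of $D_{i_0}$, and lets us read the remaining $s-1$ diagrams as a linear sequence. The pointed diagram contributes $\yk\,\Set P'(\yk,\tk,\xck,\xdk)=\sum_M M\,P_M\,\yk^M\cdots$ and the others contribute $\Set P(\yk)^{s-1}$. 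Summing over $s\geq 1$ yields the geometric series $\tfrac{1}{1-\Set P(\yk)}$ for the cell part; for $\di\in\{2,3\}$ the smallest cell has $s=2$, so the cell part acquires an extra factor $\Set P(\yk)$, while the $s=1$ contributions are instead the diameter term $1$ and the triangle term $\tv$. Multiplying by the central-region weight and by $\tfrac1\yv$ assembles the three displayed formulas.

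I expect the main obstacle to be exactly this base-edge bookkeeping: making the cyclic-to-linear rooting rigorous (that pointing at the single base edge linearises the arrangement with no over- or under-counting, including the degenerate cases $s=1$ and the case where the base edge lies on an edge of the central cell, so that the pointed fundamental diagram is itself degenerate), and checking that the vertex-count arithmetic produces precisely the combination $\tfrac{\yk}{\yv}\Set P'(\yk)$ rather than a neighbouring expression such as $\tfrac1\yv(\pv+\cv)\tfrac{\Set P(\yk)}{1-\Set P(\yk)}$. The remaining points — existence and uniqueness of the central region, and that the minimal-size constraints cleanly separate the diameter and triangle cases ($s=1$) from the genuine cell case ($s\geq 2$) when $\di\in\{2,3\}$ — are structural and should follow from the free action of $\rho$ together with the recursive description of $\Set P$.
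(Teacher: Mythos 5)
Your proposal is correct and follows essentially the same route as the paper: classify the $\rho$-fixed central region (diameter for $\di=2$, triangle for $\di=3$, clique or empty cell of size divisible by $\di$ otherwise), cut out a fundamental domain of $s=k/\di$ glued Ptolemy diagrams, root the cyclic arrangement at the diagram containing the base edge to get the factor $\yk\,\Set P'(\yk)$ times a list $1/(1-\Set P(\yk))$, and account for the $\di$-fold repetition by the substitution $\yv\mapsto\yk=\yv^\di$ etc.\ and for the vertex count by the prefactor $1/\yv$. The only cosmetic difference is that the paper marks a vertex of the first diagram in the list (which becomes the new base vertex) rather than an edge, which yields the same generating function $\yk\,\Set P'(\yk)$.
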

\begin{proof}
  Up to this point we always distinguished an edge of the $(N+1)$-gon
  when counting Ptolemy diagrams.  Clearly, this is equivalent to
  marking one of the $N+1$ vertices of the polygon.  For the
  recursive description in the introduction the former seemed more
  natural, but in this proof it will be more convenient to mark a
  vertex, which we will call the \Dfn{distinguished base vertex}
  henceforth.

  For $\di\geq 2$ we can construct a Ptolemy diagram invariant under
  rotation by $2\pi/\di$ as follows: for any multiple $s$ of $\di$,
  we choose a list of $s/\di$ Ptolemy diagrams.  In the first of
  these, we select one vertex other than the distinguished base
  vertex, which will become the distinguished base vertex of the
  diagram we are about to construct.  Then we glue the Ptolemy
  diagrams of $\di$ identical copies of this list in order along
  their distinguished base edges onto the edges of a polygon with $s$
  vertices.  Of course, in the degenerate case $s=2$ we simply have
  two identical Ptolemy diagrams which we glue onto each other along
  their distinguished base edges.  Finally, if $s\geq 4$, we choose
  whether this central region should be an clique or an empty cell.

  \begin{figure}[h]
    \centering
    \raisebox{40pt}{$\bigg($}%
    \begin{overpic}[clip=true,trim=100 110 416 695,angle=270]{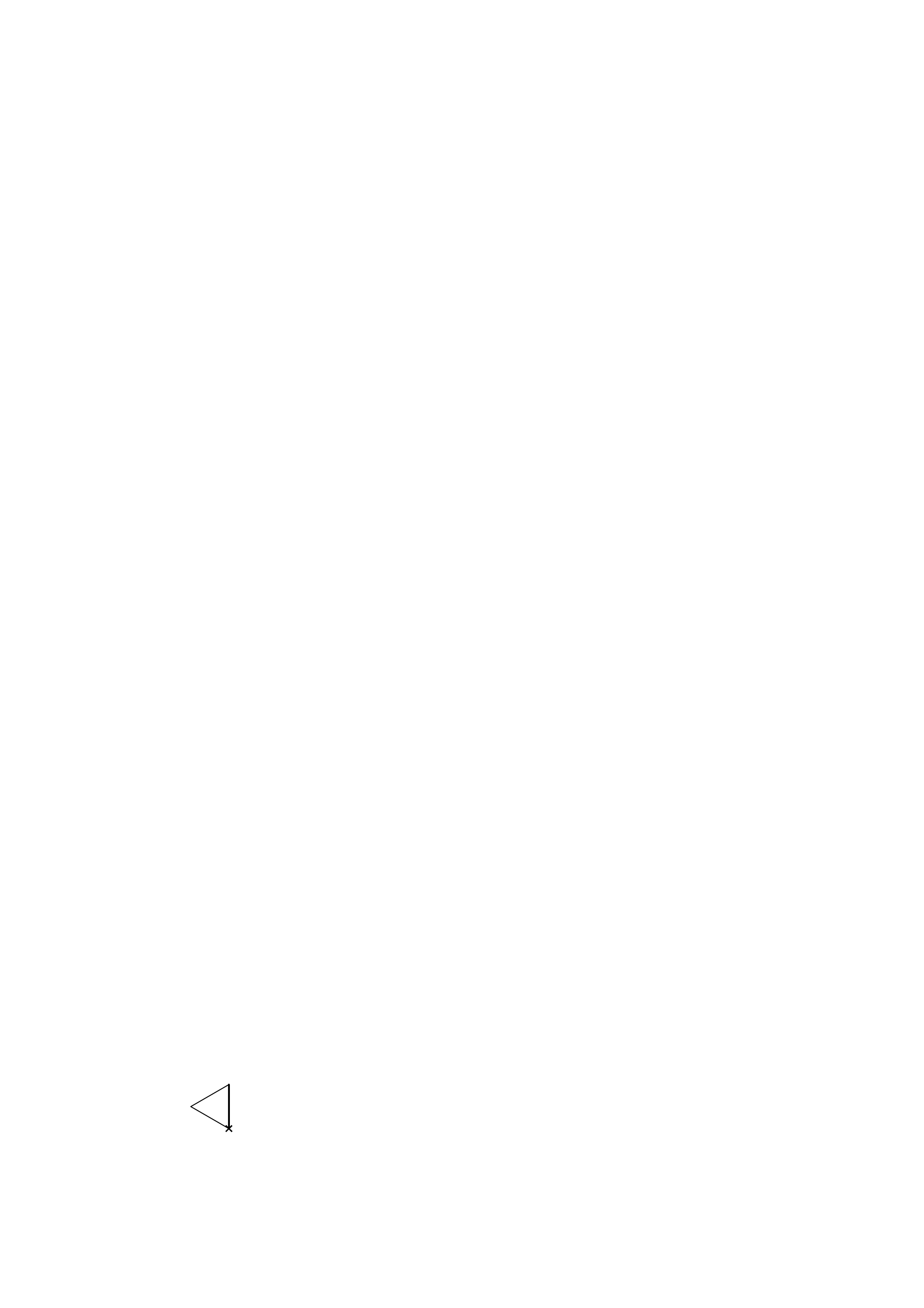}
    \end{overpic}
    \raisebox{34pt}{$,\;$}%
    \begin{overpic}[clip=true,trim=100 110 416 695,angle=270]{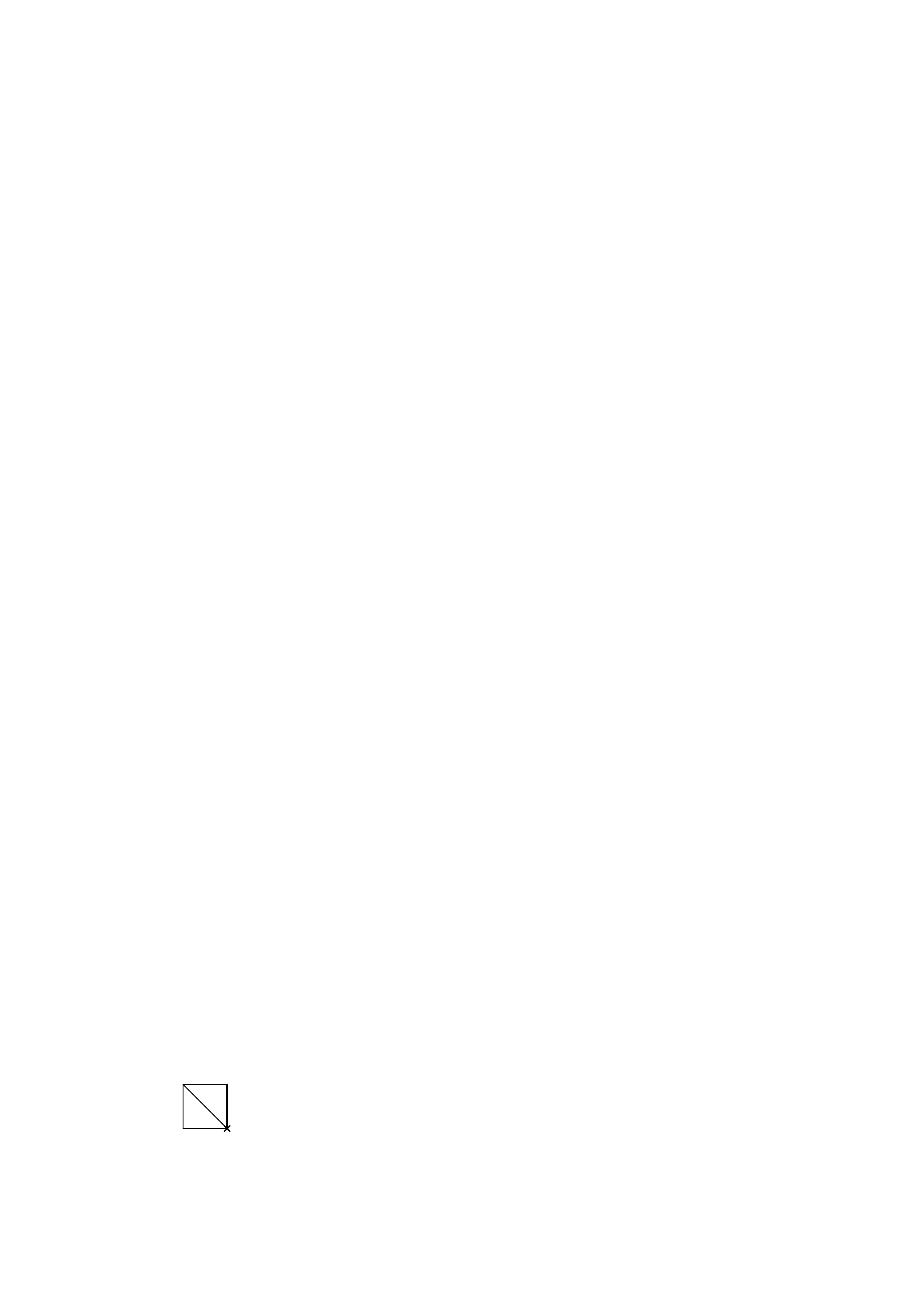}
    \end{overpic}
    \raisebox{34pt}{$,\;$}%
    \begin{overpic}[clip=true,trim=100 110 444 695,angle=270]{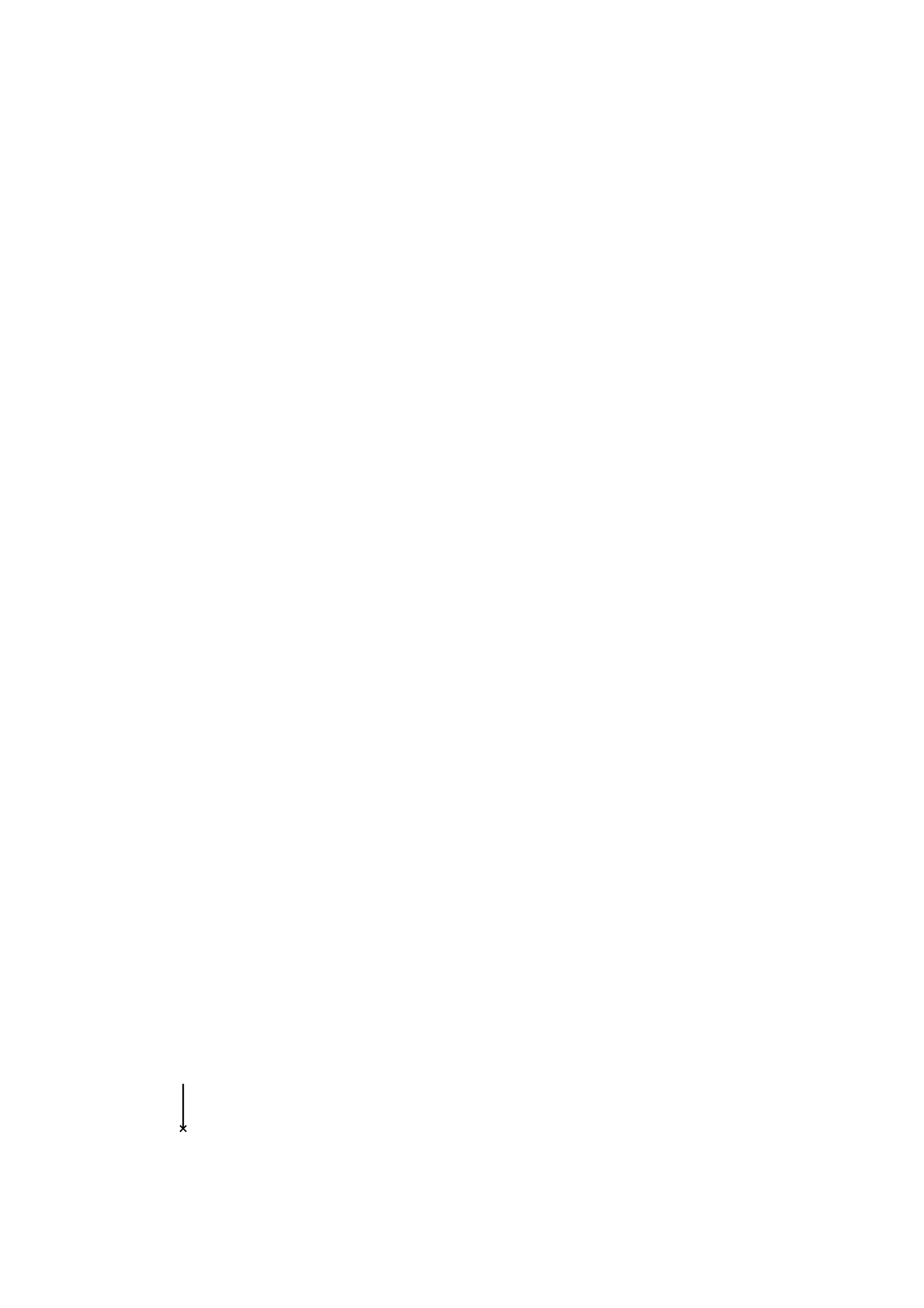}
    \end{overpic}
    \raisebox{40pt}{$\bigg)\mapsto\;$}%
    \begin{overpic}[clip=true,trim=85 110 415 605,angle=270]{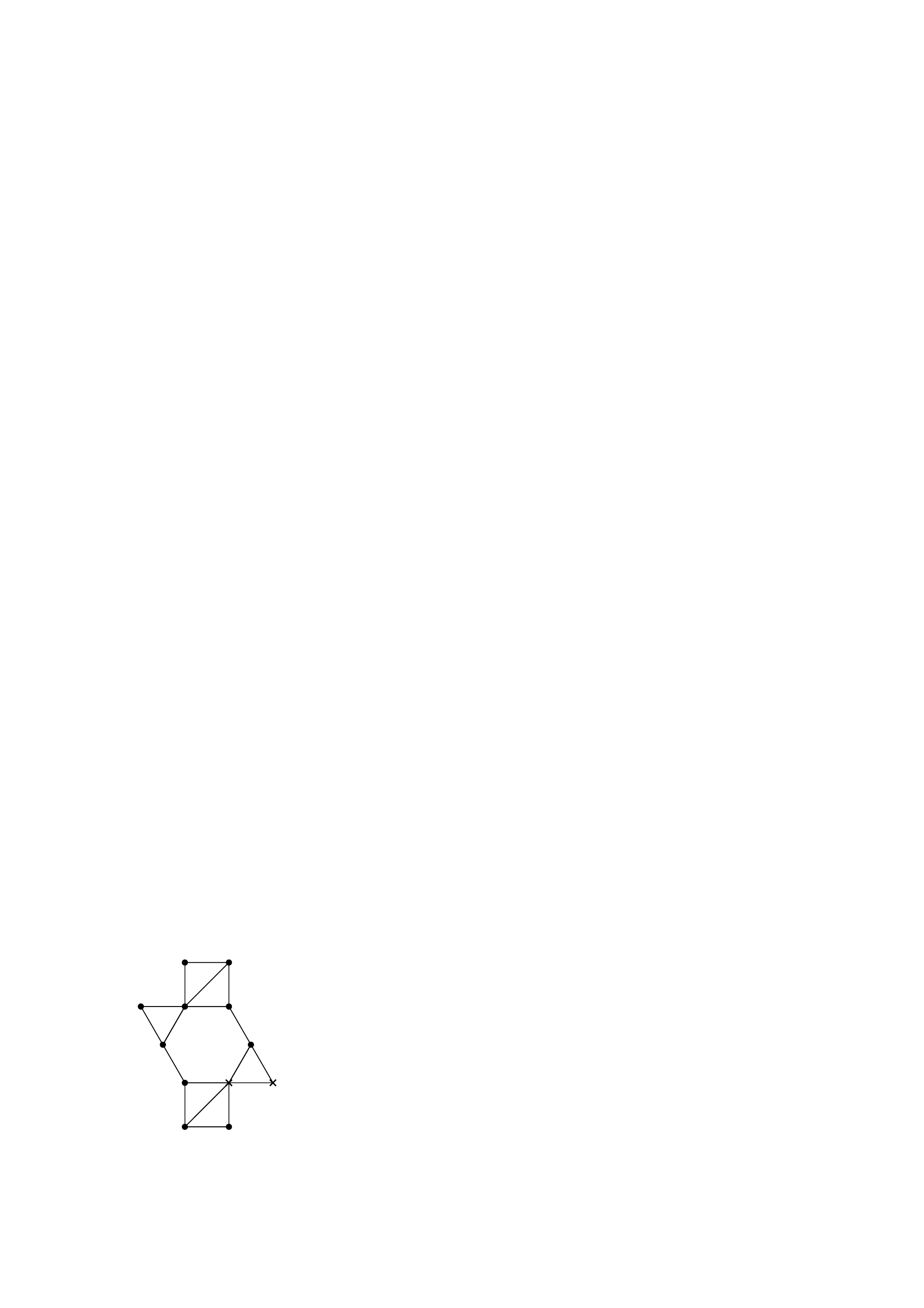}
    \end{overpic}
    \caption{Constructing a Ptolemy diagram invariant under rotation
      by $\pi/4$.  Crosses indicate different choices of base
      vertices.}
    \label{fig:correspondence}
  \end{figure}

  Conversely, given a Ptolemy diagram invariant under rotation by
  $2\pi/\di$, the central region is the region (or possibly the
  diameter) containing the geometric center of the polygon when drawn
  regular and with all diagonals straight.  Let $s$ be the number of
  vertices of this central region.  Cutting out the central region we
  obtain a circular arrangement of smaller Ptolemy diagrams.  We now
  select the first $s/\di$ diagrams in this arrangement, starting
  with the one that contains the distinguished base edge, {\it i.e.},
  the edge that comes just before the distinguished base vertex when
  going clockwise.  An example for this correspondence with $\di=2$
  and $s=6$ can be found in Figure~\ref{fig:correspondence}.

  Let us translate this description into the expressions for the
  generating functions as given in the statement of the lemma.  To
  this end, recall that the generating function of \Dfn{pointed}
  Ptolemy diagrams, {\it i.e.} diagrams with a vertex other than the
  distinguished base vertex selected, equals $\yv\Set
  P'(\yv,\tv,\pv,\cv)$ (see for example \cite[Section~2.1]{BLL}), and
  the generating function of \Dfn{lists} of Ptolemy diagrams is
  $1/\left(1-\Set P(\yv,\tv,\pv,\cv)\right)$.  Since we attach every
  Ptolemy diagram in the list $\di$ times, the number of vertices,
  triangles, etc., in each diagram has to be multiplied by $\di$,
  which is accomplished by replacing $\yv$ by $\yk$, $t$ by $\tk$,
  etc.  Finally, we have to divide by $\yv$, because this variable
  marks the number of vertices \emph{minus one}.
\end{proof}
	
\begin{proof}[Proof of Theorem~\ref{thm:explicit}]
  Following Lemma~\ref{thm:decomposition}, we will treat $\di=2$, $\di=3$
  and $\di\geq 4$ separately.  However, let us first compute the
  expansions of $\yk\Set P'(\yk,\tk,\xck,\xdk)$ and $\yk\frac{\Set
    P'(\yk,\tk,\xck,\xdk)}{1-\Set P(\yk,\tk,\xck,\xdk)}$, which will
  be needed for all three cases.  The first expansion is essentially
  Theorem~\ref{thm:number}:
  \begin{multline*}
    \yk\Set P'(\yk,\tk,\xck,\xdk)%
    =\sum_{n,\tr,\po,\cl}nP_{n,\tr,\po,\cl}\yk^n\tk^\tr \xck^\po \xdk^\cl\\
    =\sum_{n,\tr,\po,\cl}%
    \binom{n-1+\tr+\po+\cl}{n-1,\tr,\po,\cl}
    \binom{n-2-\tr-\po-\cl}{\po+\cl-1}%
    \yk^n\tk^\tr \xck^\po \xdk^\cl.
  \end{multline*}
  For the second, we compute
  \begin{align*}
    [\yk^n] \yk\frac{\Set P'(\yk,\tk,\xck,\xdk)}%
    {1-\Set P(\yk,\tk,\xck,\xdk)}%
    &=[\yk^{n-1}] \frac{\Set P'(\yk,\tk,\xck,\xdk)}%
    {1-\Set P(\yk,\tk,\xck,\xdk)}\\
    &= [\yk^{n-1}] \left(\log\frac{1}%
      {1-\Set P(\yk,\tk,\xck,\xdk)}\right)' \\
    &= n [\yk^n] \log\frac{1}{1-\Set P(\yk,\tk,\xck,\xdk)} \\
    &= [\yk^{n-1}] \frac{1}{1-\yk}%
    \left(\frac{Q(\yk)}{\yk}\right)^{-n}.
  \end{align*}
  In the last line we used Lagrange inversion with
  $H(\yk)=\log\left(1/(1-\yk)\right)$ and
  $Q(\yk)=\yk\left(1-\tk\yk-(\xck+\xdk)\frac{\yk^2}{1-\yk}\right)$.
  The expansion of $\left(\frac{Q(\yk)}{\yk}\right)^{-n}$ was already
  computed in the proof of Theorem~\ref{thm:number}; taking into
  account the additional factor $\frac{1}{1-\yk}$ we obtain
  \begin{equation*}
    \yk\frac{\Set P'(\yk)}{1-\Set P(\yk)}=%
    \sum_{n,\tr,\po,\cl} \binom{n-1+\tr+\po+\cl}{n-1,\tr,\po,\cl}%
    \binom{n-1-\tr-\po-\cl}{\po+\cl} \yk^n\tk^\tr \xck^\po \xdk^\cl .
  \end{equation*}
  
  \textbf{Case $\di\ge 4$.}  By Lemma~\ref{thm:decomposition}, we
  need to compute the coefficient of
  $\yv^N=\yv^{kn-1}=\frac{1}{\yv}\yk^n$ in
  $$
  \frac{1}{\yv}\yk\,\Set P'(\yk,\tk,\xck,\xdk) %
  \frac{\pv+\cv}{1-\Set P(\yk,\tk,\xck,\xdk)},
  $$
  where $\yk=\yv^\di$, $\tk=\tv^\di$, $\xck=\pv^\di$ and
  $\xdk=\cv^\di$.

  Thus, the exponent of $\tv$ and of one of $\pv$ and $\cv$ must be
  divisible by $\di$, while the exponent of the other variable equals
  $1\pmod\di$.  We conclude that the number
  $P^{(\di)}_{N,\tr,\po,\cl}$ of Ptolemy diagrams in $\Set
  P_{N,\tr,\po,\cl}$ that are invariant under rotation by
  $\frac{2\pi}{\di}$ is
  $$
  \binom{\frac{N+1}{\di}-1+\frac{\tr}{\di}+\frac{\po}{\di}+\frac{\cl-1}{\di}}%
  {\frac{N+1}{\di}-1,\frac{\tr}{\di},\frac{\po}{\di},\frac{\cl-1}{\di}}
  \binom{\frac{N+1}{\di}-1-\frac{\tr}{\di}-\frac{\po}{\di}-\frac{\cl-1}{\di}}%
  {\frac{\po}{\di}+\frac{\cl-1}{\di}}
  $$
  if $N+1\equiv \tr\equiv \po\equiv \cl-1\equiv 0\pmod\di$,
  $$
  \binom{\frac{N+1}{\di}-1+\frac{\tr}{\di}+\frac{\po-1}{\di}+\frac{\cl}{\di}}%
  {\frac{N+1}{\di}-1,\frac{\tr}{\di},\frac{\po-1}{\di},\frac{\cl}{\di}}
  \binom{\frac{N+1}{\di}-1-\frac{\tr}{\di}-\frac{\po-1}{\di}-\frac{\cl}{\di}}%
  {\frac{\po-1}{\di}+\frac{\cl}{\di}}
  $$
  if $N+1\equiv \tr\equiv \po-1\equiv \cl\equiv 0\pmod\di$, and $0$
  otherwise, as claimed.
	
  \textbf{Case $\di=2$.}  By Lemma~\ref{thm:decomposition}, we need
  to compute the coefficient of $\yv^N$ in
  \begin{align}\notag
    \frac{1}{\yv}\yk\,\Set P'(\yk, \tk, \xck, &\xdk) %
    \left(1+\Set P(\yk, \tk, \xck, \xdk)%
      \frac{\pv+\cv}%
      {1-\Set P(\yk, \tk, \xck, \xdk)}\right)\\\label{eq:3}%
    =&\frac{1}{\yv}\yk\,\Set P'(\yk, \tk, \xck, \xdk)\\\label{eq:4}
    &+\frac{1}{\yv}(\pv+\cv)\left(\yk%
      \frac{\Set P'(\yk, \tk, \xck, \xdk)}%
      {1-P(\yk, \tk, \xck, \xdk)}%
      -\yk\Set P'(\yk, \tk, \xck, \xdk)\right)
  \end{align}

  Using the expansions for $\yk\Set P'(\yk)$ and $\yk\frac{\Set
    P'(\yk)}{1-\Set P(\yk)}$ and the recurrence
  $\binom{n}{k}=\binom{n-1}{k-1}+\binom{n-1}{k}$ we obtain
  \begin{multline*}
    \yk\frac{\Set P'(\yk)}{1-\Set P(\yk)}-\yk \Set P'(\yk)\\
    \begin{aligned}
    =&\sum_{n,\tr,\po,\cl} \binom{n-1+\tr+\po+\cl}{n-1,\tr,\po,\cl}
    \binom{n-1-\tr-\po-\cl}{\po+\cl}
    \yk^n\tk^\tr \xck^\po \xdk^\cl \\
    &-\sum_{n,\tr,\po,\cl} \binom{n-1+\tr+\po+\cl}{n-1,\tr,\po,\cl}
    \binom{n-2-\tr-\po-\cl}{\po+\cl-1}
    \yk^n\tk^\tr \xck^\po \xdk^\cl \\
    =&\sum_{n,\tr,\po,\cl} \binom{n-1+\tr+\po+\cl}{n-1,\tr,\po,\cl}
    \binom{n-2-\tr-\po-\cl}{\po+\cl} \yk^n\tk^\tr \xck^\po \xdk^\cl .
    \end{aligned}
  \end{multline*}
  
  We can now extract the coefficient of $\yv^N$ separately from the
  summands \eqref{eq:3} and \eqref{eq:4} to derive the expressions
  claimed.

  \textbf{Case $\di=3$.}  By Lemma~\ref{thm:decomposition}, we need to
  compute the coefficient of $y^N$ in
  \begin{align}\notag
    \frac{1}{\yv}\yk\,\Set P'(\yk, \tk, \xck, &\xdk) %
    \left(\tv+\Set P(\yk, \tk, \xck, \xdk)%
      \frac{\pv+\cv}{1-\Set P(\yk, \tk, \xck, \xdk)}\right)\\%
    \label{eq:5}%
    =&\frac{1}{\yv}\tv\yk \Set P'(\yk, \tk, \xck, \xdk)\\%
    \label{eq:6}
    &+\frac{1}{\yv}(\pv+\cv)\left(\yk%
      \frac{\Set P'(\yk, \tk, \xck, \xdk)}%
      {1-P(\yk, \tk, \xck, \xdk)}%
      -\yk\Set P'(\yk, \tk, \xck, \xdk)\right).
  \end{align}
  This is completely analogous to the case $\di=2$, the only difference
  being the factor $\tv$ in the first summand.
\end{proof}

\section{Evaluating the $q$-binomial coefficients}
In this section we show Theorem~\ref{thm:sieving} by evaluating the
expression given there at roots of unity, and thus checking that the
result indeed equals the expression in Theorem~\ref{thm:explicit}.
The evaluations of the $q$-binomial coefficients will be based on the
$q$-Lucas theorem:
\begin{lem}[$q$-Lucas theorem]
  Let $\omega$ be a primitive $\di$\textsuperscript{th} root of unity
  and $a$ and $b$ non-negative integers.  Then
  \begin{equation*}
    \qbinom[\omega]{a}{b}=%
    \binom{\lfloor\frac{a}{\di}\rfloor}{\lfloor\frac{b}{\di}\rfloor}
    \qbinom[\omega]{a-\di\lfloor\frac{a}{\di}\rfloor}{b-\di\lfloor\frac{b}{\di}\rfloor}.
  \end{equation*}
  In particular, if $b\equiv 0\pmod\di$
  \begin{equation*}
    \qbinom[\omega]{a}{b}=%
    \binom{\lfloor\frac{a}{\di}\rfloor}{\lfloor\frac{b}{\di}\rfloor}.
  \end{equation*}
\end{lem}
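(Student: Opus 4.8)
The plan is to deduce the theorem from Gauss's $q$-binomial theorem
$$
\prod_{j=0}^{a-1}(1+q^j t)=\sum_{b=0}^{a}q^{\binom{b}{2}}\qbinom{a}{b}t^b,
$$
evaluated at a primitive $\di$th root of unity $\omega$, by comparing coefficients of $t^b$ on both sides. The whole point is that, after substituting $q=\omega$, each factor $1+\omega^j t$ depends only on $j\bmod\di$, so the $a$ factors on the left can be collected into $\lfloor a/\di\rfloor$ complete blocks of length $\di$ together with one shorter block; evaluating each complete block in closed form is what produces the ordinary binomial coefficient, while the short block produces the residual $q$-binomial coefficient.

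Concretely, I would first record the elementary identity
$$
\prod_{j=0}^{\di-1}(1+\omega^j t)=1-(-1)^{\di}t^{\di},
$$
which holds because both sides are polynomials of degree $\di$ in $t$ with constant term $1$ whose roots are exactly the $\di$th roots of $(-1)^\di$, namely the numbers $-\omega^{-j}$. Writing $a=\di A+r$ and $b=\di B+s$ with $0\le r,s<\di$, the residue $\rho$ occurs $A+1$ times among $0,\dots,a-1$ when $\rho<r$ and $A$ times otherwise, so the left-hand side factors as
$$
\bigl(1-(-1)^{\di}t^{\di}\bigr)^{A}\;\prod_{\rho=0}^{r-1}(1+\omega^{\rho}t).
$$
Expanding the first factor by the ordinary binomial theorem and the second by Gauss's theorem applied to $r<\di$, the coefficient of $t^{b}=t^{\di B+s}$ comes out as $\binom{A}{B}(-1)^{(\di-1)B}\omega^{\binom{s}{2}}\qbinom[\omega]{r}{s}$ when $s\le r$, and as $0$ otherwise, since then the exponent $b$ admits no admissible splitting $\di B+s$ with $0\le s\le r$.

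Comparing this with the coefficient $\omega^{\binom{b}{2}}\qbinom[\omega]{a}{b}$ furnished directly by Gauss's theorem gives the claimed product $\binom{A}{B}\qbinom[\omega]{r}{s}$ up to the discrepancy between the prefactor $\omega^{\binom{b}{2}-\binom{s}{2}}$ and $(-1)^{(\di-1)B}$, and reconciling these is the only genuinely delicate point. I would handle it using $\binom{\di B+s}{2}=\binom{\di B}{2}+\di B\,s+\binom{s}{2}$ together with $\omega^{\di}=1$, which collapses the prefactor on the Gauss side to $\omega^{\binom{\di B}{2}}$; then I would split into the two parities of $\di$. For $\di$ odd, $\binom{\di B}{2}$ is a multiple of $\di$, so $\omega^{\binom{\di B}{2}}=1=(-1)^{(\di-1)B}$; for $\di$ even, $\omega^{\di/2}=-1$ forces $\omega^{\binom{\di B}{2}}=(-1)^{B}=(-1)^{(\di-1)B}$. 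In either case the prefactors agree, yielding $\qbinom[\omega]{a}{b}=\binom{A}{B}\qbinom[\omega]{r}{s}$, which is precisely the assertion because $A=\lfloor a/\di\rfloor$, $B=\lfloor b/\di\rfloor$, $r=a-\di\lfloor a/\di\rfloor$ and $s=b-\di\lfloor b/\di\rfloor$; when $s>r$ both sides vanish, in agreement with $\qbinom[\omega]{r}{s}=0$. The stated special case $b\equiv 0\pmod\di$ is then immediate, since $s=0$ gives $\qbinom[\omega]{r}{0}=1$.
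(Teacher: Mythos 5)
Your proof is correct, and it is genuinely more self-contained than the paper's: the paper does not prove the $q$-Lucas theorem at all, but simply cites Sagan's \emph{Congruence properties of $q$-analogs} (Theorem~2.2 there), where the result is obtained by a group-action/orbit-counting argument. Your route is the classical generating-function proof: evaluate Gauss's identity $\prod_{j=0}^{a-1}(1+q^jt)=\sum_b q^{\binom{b}{2}}\qbinom{a}{b}t^b$ at $q=\omega$, group the factors into $\lfloor a/\di\rfloor$ full blocks, each collapsing to $1-(-1)^{\di}t^{\di}$, plus one residual block, and compare coefficients of $t^b$. All the steps check out: the block identity is right, the count of how often each residue occurs among $0,\dots,a-1$ is right, the uniqueness of the splitting $b=\di B+s$ with $0\le s<\di$ correctly forces the coefficient to vanish when $s>r$ (consistently with $\qbinom[\omega]{r}{s}=0$ there), and the delicate sign reconciliation is handled correctly — $\binom{\di B+s}{2}=\binom{\di B}{2}+\di Bs+\binom{s}{2}$ together with the parity split does give $\omega^{\binom{\di B}{2}}=(-1)^{(\di-1)B}$ in both cases. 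What your approach buys is a short, purely algebraic proof from a single standard identity; what the citation buys the paper is brevity and, in Sagan's treatment, a uniform combinatorial framework covering several $q$-congruences at once. Either is a legitimate way to discharge the lemma.
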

\begin{proof}
  A proof may be found, for example,
  in~\cite[Theorem~2.2]{MR1176155}.
\end{proof}

For greater clarity, we formulate the evaluation of the
$q$-multinomial coefficient appearing in Theorem~\ref{thm:sieving} as
a lemma:
\begin{lem}
  Let $\omega$ be a primitive $\di$\textsuperscript{th} root of unity
  with $\di\geq 2$ and $n\equiv 0\pmod\di$.  Then
  \begin{align*}
    \frac{1}{\qi[\omega]{n-1}}\qbinom[\omega]{n-2+a+b+c}{n-2,a,b,c}&=%
    \binom{\lfloor\frac{n+a+b+c}{\di}\rfloor-1}%
    {\frac{n}{\di}-1,%
      \lfloor\frac{a}{\di}\rfloor,%
      \lfloor\frac{b}{\di}\rfloor,%
      \lfloor\frac{c}{\di}\rfloor}%
    \intertext{if one of $a$, $b$ and $c$ equals $1\mymod\di$ and the
      others equal $0\mymod\di$, or $\di=2$ and $a\equiv b\equiv
      c\equiv 0\mymod\di$.  Furthermore,}
    \frac{1}{\qi[\omega]{n-1}}\qbinom[\omega]{n-2+a+b+c}{n-2,a,b,c}&=0
  \end{align*}
  otherwise, except if $\di>2$ and $a\equiv b\equiv c\equiv
  0\pmod\di$ -- we do not make a statement for this case.
\end{lem}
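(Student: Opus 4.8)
The plan is to reduce the whole evaluation to the $q$-Lucas theorem. First I would split the $q$-multinomial coefficient into a product of three ordinary $q$-binomial coefficients,
$$\qbinom[\omega]{n-2+a+b+c}{n-2,a,b,c}=\qbinom[\omega]{n-2+a+b+c}{n-2}\,\qbinom[\omega]{a+b+c}{a}\,\qbinom[\omega]{b+c}{b},$$
so that $q$-Lucas can be applied to each factor separately. Since every $q$-binomial coefficient is a genuine polynomial in $q$, evaluating at $\omega$ causes no $0/0$ ambiguity. Moreover, as $n\equiv 0\pmod\di$ we have $n-1\equiv\di-1\pmod\di$, and hence $\qi[\omega]{n-1}=\qi[\omega]{\di-1}=\qbinom[\omega]{\di-1}{\di-2}=-\omega^{-1}\neq 0$. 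In particular the prefactor $1/\qi[\omega]{n-1}$ is a well-defined nonzero scalar, and I record the identity $\qi[\omega]{n-1}=\qbinom[\omega]{\di-1}{\di-2}$ for later cancellation.

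Next I would apply $q$-Lucas to each of the three factors. Writing the remainders $a'=a-\di\lfloor a/\di\rfloor$, $b'=b-\di\lfloor b/\di\rfloor$, $c'=c-\di\lfloor c/\di\rfloor$ (all in $\{0,\dots,\di-1\}$) and using $n\equiv 0$, each factor splits as an ordinary integer binomial coefficient of the quotients times a residual $q$-binomial coefficient of the remainders. Collecting the three integer factors, one checks that whenever the relevant additions of remainders are carry-free the quotient binomials multiply to exactly
$$\binom{\lfloor\frac{n+a+b+c}{\di}\rfloor-1}{\frac{n}{\di}-1,\lfloor\frac{a}{\di}\rfloor,\lfloor\frac{b}{\di}\rfloor,\lfloor\frac{c}{\di}\rfloor},$$
using that $\lfloor\frac{n-2+a+b+c}{\di}\rfloor=\lfloor\frac{n+a+b+c}{\di}\rfloor-1$ in that range. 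The remaining residual $q$-binomials, together with the prefactor, then decide whether the value is this multinomial, zero, or something outside the claim.

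The heart of the argument is the residue bookkeeping. The residual coefficient from the first factor has lower entry $(n-2)-\di\lfloor (n-2)/\di\rfloor=\di-2$, so it is nonzero only when $\di-2\le\bigl((\di-2)+a'+b'+c'\bigr)-\di\lfloor\cdots\rfloor$, i.e.\ when $a'+b'+c'\equiv 0$ or $1\pmod\di$; the analogous (milder) conditions from the other two factors then force the sharp statement that the product of all three residuals is nonzero \emph{precisely when} $a'+b'+c'\le 1$. Thus the coefficient vanishes unless either all of $a',b',c'$ are $0$, or exactly one of them equals $1$ and the others vanish --- which is exactly the dichotomy in the lemma. In the ``one equals $1$'' case the only nontrivial residual factor is $\qbinom[\omega]{\di-1}{\di-2}$, cancelled exactly by $1/\qi[\omega]{n-1}=1/\qbinom[\omega]{\di-1}{\di-2}$, leaving the bare integer multinomial. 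In the ``all zero'' case the residual factor is $\qbinom[\omega]{\di-2}{\di-2}=1$, so the value equals the multinomial divided by $\qi[\omega]{n-1}$; this is the clean multinomial exactly when $\qi[\omega]{n-1}=1$, i.e.\ when $\di=2$, and otherwise equals $-\omega$ times an integer, which is why no statement is made for $\di>2$ in that case.

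The main obstacle I anticipate is the residue case analysis, in particular tracking the asymmetric shift $(n-2)-\di\lfloor(n-2)/\di\rfloor=\di-2$, which is what singles out the condition $a'+b'+c'\le 1$ and isolates the two surviving patterns. One must also verify that no carries occur in the quotient binomials for these two patterns, so that their product really collapses to the stated multinomial, and one must keep the borderline behaviour of $\di=2$ separate from $\di>2$ in the all-zero case, since there the prefactor $1/\qi[\omega]{n-1}$ fails to be $1$.
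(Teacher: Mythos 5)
Your proposal is correct, and it rests on the same engine as the paper's proof: factor the $q$-multinomial into $q$-binomials, apply the $q$-Lucas theorem to each factor, and cancel $\qi[\omega]{n-1}=\qi[\omega]{\di-1}$ against a leftover residual $q$-binomial. The difference is in the factorization. The paper uses the nested product
$\qbinom[\omega]{n-2+a}{a}\qbinom[\omega]{n-2+a+b}{b}\qbinom[\omega]{n-2+a+b+c}{c}$,
so the awkward residue $\di-2$ coming from $n-2$ appears in every factor, and the authors then invoke the symmetry of the left-hand side in $a,b,c$ to reduce to a handful of representative cases ($a\equiv 1$; $a\equiv 0$ with $\di=2$; $a\equiv b\equiv 1$; $a\equiv e$ with $e\ge 2$). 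Your factorization
$\qbinom[\omega]{n-2+a+b+c}{n-2}\qbinom[\omega]{a+b+c}{a}\qbinom[\omega]{b+c}{b}$
isolates the residue $\di-2$ in a single factor, which buys you a cleaner, unified vanishing criterion: the product of residual $q$-binomials is nonzero precisely when $a'+b'+c'\le 1$, recovering both surviving patterns at once without appealing to symmetry. Your residue bookkeeping checks out (in particular the ruling out of the wrap-around cases $a'+b'+c'\in\{\di,\di+1,2\di,2\di+1\}$ via the lower entries $a'$ and $b'$ of the second and third residuals), the telescoping of the quotient binomials into the integer multinomial is correct in both surviving patterns, and your explicit evaluation $\qi[\omega]{\di-1}=-\omega^{-1}$ even explains quantitatively why the all-zero case with $\di>2$ falls outside the clean statement. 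The one thing worth writing out carefully in a full version is the claim that the quotient binomials ``multiply to exactly'' the stated multinomial, i.e.\ the verification that $\lfloor\frac{a+b+c}{\di}\rfloor=\lfloor\frac{a}{\di}\rfloor+\lfloor\frac{b}{\di}\rfloor+\lfloor\frac{c}{\di}\rfloor$ and $\lfloor\frac{n-2+a+b+c}{\di}\rfloor=\lfloor\frac{n+a+b+c}{\di}\rfloor-1$ in the two surviving residue patterns; both hold, but they are where an error would hide.
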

\begin{proof}
  Let us first write rewrite the multinomial coefficient as a product
  of binomial coefficients:
  \begin{multline*}
    \frac{1}{\qi{n-1}}\qbinom{n-2+a+b+c}{n-2,a,b,c}\\
    =\frac{1}{\qi{n-1}}\qbinom{n-2+a}{a}\qbinom{n-2+a+b}{b}\qbinom{n-2+a+b+c}{c}.
  \end{multline*}

  Since $\frac{1}{\qi{n-1}}\qbinom{n-2+a+b+c}{n-2,a,b,c}$ is
  symmetric in $a$, $b$ and $c$ it is sufficient to consider the
  following cases to prove the first equality:

  If $a\equiv 1\pmod\di$, the $q$-Lucas theorem and
  $\qi[\omega]{n-1}=\qi[\omega]{\di-1}$ implies
  \begin{align*}
    \frac{1}{\qi[\omega]{n-1}}\qbinom[\omega]{n-2+a}{a}
    &=\frac{1}{\qi[\omega]{\di-1}}%
    \binom{\lfloor\frac{n-2+a}{\di}\rfloor}{\lfloor\frac{a}{\di}\rfloor}%
    \qbinom[\omega]{\di-1}{1}\\
    &=\binom{\lfloor\frac{n-2+a}{\di}\rfloor}{\lfloor\frac{a}{\di}\rfloor}
    =\binom{\lfloor\frac{n+a}{\di}\rfloor-1}{\lfloor\frac{a}{\di}\rfloor}.
  \end{align*}

  If $a\equiv 0\pmod\di$ and $\di=2$ we obtain by similar means
  \begin{align*}
    \frac{1}{\qi[\omega]{n-1}}\qbinom[\omega]{n-2+a}{a}
    &=\frac{1}{\qi[\omega]{1}}%
    \binom{\frac{n-2+a}{\di}}{\frac{a}{\di}}\\%
    &=\binom{\lfloor\frac{n+a}{\di}\rfloor-1}{\lfloor\frac{a}{\di}\rfloor}.
  \end{align*}

  Suppose now that $b\equiv c\equiv 0\pmod\di$ and $a\equiv 0\pmod\di$
  or $a\equiv 1\pmod\di$.  Then, again by the $q$-Lucas theorem,
  $$
  \qbinom[\omega]{n-2+a+b}{b}%
  =\binom{\lfloor\frac{n-2+a+b}{\di}\rfloor}{\frac{b}{\di}}
  =\binom{\lfloor\frac{n+a+b}{\di}\rfloor-1}{\lfloor\frac{b}{\di}\rfloor}
  $$
  and
  $$
  \qbinom[\omega]{n-2+a+b+c}{c}%
  =\binom{\lfloor\frac{n-2+a+b+c}{\di}\rfloor}{\frac{c}{\di}}%
  =\binom{\lfloor\frac{n+a+b+c}{\di}\rfloor-1}{\lfloor\frac{c}{\di}\rfloor}.
  $$

  To show the second equality, again taking advantage of the
  symmetry, we only have to distinguish two cases: on the one hand,
  if $a\equiv b\equiv 1\pmod\di$, we have
  $$
  \qbinom[\omega]{n-2+a+b}{b}=\binom{\frac{n-2+a+b}{\di}}{\frac{b-1}{\di}}
  \qbinom[\omega]{0}{1}=0.
  $$
  On the other hand, if $a\equiv e\pmod\di$ with $e\geq 2$, then
  $$
  \qbinom[\omega]{n-2+a}{a}=%
  \binom{\lfloor\frac{n-2+a}{\di}\rfloor}{\lfloor\frac{a}{\di}\rfloor}
  \qbinom[\omega]{e-2}{e}=0.
  $$
\end{proof}
\begin{proof}[Proof of Theorem~\ref{thm:sieving}]
  Let $\di\geq 2$, $N+1\equiv 0\pmod\di$ and $\omega$ a primitive
  $\di$\textsuperscript{th} root of unity.  We have to show that
  $P_{N,\tr,\po,\cl}(\omega)=P^{(\di)}_{N,\tr,\po,\cl}$.  To this
  end, let
  \begin{align*}
    M_q &
    =\frac{1}{\qi{N}}\qbinom{N-1+\tr+\po+\cl}{N-1,\tr,\po,\cl},%
    & B_q &=\qbinom{N-2-\tr-\po-\cl}{\po+\cl-1},\\
    M &=
    \binom{\frac{N+1}{\di}-1+\lfloor\frac{\tr+\po+\cl}{\di}\rfloor}%
    {\frac{N+1}{\di}-1, \lfloor\frac{\tr}{\di}\rfloor,
      \lfloor\frac{\po}{\di}\rfloor, \lfloor\frac{\cl}{\di}\rfloor},%
    & B &=\binom{\lfloor\frac{N-2-\tr-\po-\cl}{\di}\rfloor}%
    {\lfloor\frac{\po+\cl-1}{\di}\rfloor}.
  \end{align*}

  Let us first check the cases where $M_\omega = M$:
  \begin{enumerate}[(i)]
  \item $\di=2$, $\tr\equiv\po\equiv\cl\equiv 0\pmod\di$:\par
    we have $N-2-\tr-\po-\cl\equiv \po+\cl-1\equiv 1\pmod\di$, thus
    the $q$-Lucas theorem entails $B_\omega = B$.
  \item $\tr\equiv 1\pmod\di$ and $\po\equiv \cl\equiv
    0\pmod\di$:\par
    we have $N-2-\tr-\po-\cl\equiv -4\pmod\di$ and $\po+\cl-1\equiv
    -1\pmod\di$.  Thus,
    \begin{enumerate}[(a)]
    \item if $\di=2$, the $q$-binomial coefficient on the right hand
      side of the $q$-Lucas theorem is $\qbinom[\omega]{0}{1}=0$, and
      thus $B_\omega=0$.
    \item If $\di\geq4$, it is $\qbinom[\omega]{\di-4}{\di-1}=0$, and
      therefore $B_\omega=0$.
    \item However, if $\di=3$, it is
      $\qbinom[\omega]{2\di-4}{\di-1}=\qbinom[\omega]{2}{2}=1$, and
      $B_\omega=B$.
    \end{enumerate}
  \item $\tr\equiv 0\pmod\di$ and $\po\equiv \cl-1\equiv 0\pmod\di$
    or $\po-1\equiv \cl\equiv 0\pmod\di$:\par
    in this case $\po+\cl-1\equiv 0\pmod\di$, and the $q$-Lucas
    theorem entails $B_\omega=B$.
  \end{enumerate}
  It remains to check that $M_\omega\cdot B_\omega = 0$ in all other
  cases.  Suppose that $M_\omega\neq 0$, then $\di>2$ and
  $\tr\equiv\po\equiv\cl\equiv 0\pmod\di$.  Thus
  $N-2-\tr-\po-\cl\equiv -3\pmod\di$ and $\po+\cl-1\equiv
  -1\pmod\di$, and the $q$-binomial coefficient on the right hand
  side of the $q$-Lucas theorem is $\qbinom[\omega]{\di-3}{\di-1}=0$.
\end{proof}

\section*{Acknowledgements}
We would like to thank David Pauksztello for his suggestion to
provide counts for Ptolemy Diagrams invariant under taking
perpendiculars.

\providecommand{\cocoa} {\mbox{\rm C\kern-.13em o\kern-.07em C\kern-.13em
  o\kern-.15em A}}
\providecommand{\bysame}{\leavevmode\hbox to3em{\hrulefill}\thinspace}
\providecommand{\href}[2]{#2}

\end{document}